 \def\Dj{\hBox{D\kern-.73em\raise.30ex\hBox{-} \raise-.30ex\hBox{}}}
 \def\dj{\hBox{d\kern-.33em\raise.80ex\hBox{-} \raise-.80ex\hBox{\kern-.40em}}}
\def\<{\langle}                     
\def\>{\rangle}                     
\theoremstyle{plain}
\newtheorem{theorem}{Theorem}[section]
\newtheorem{lemma}{Lemma}[section]
\newtheorem{remark}{Remark}[section]
\theoremstyle{definition}
\newtheorem{definition}{Definition}[section]
\numberwithin{equation}{section}
\begin{document}
\setcounter{page}{1}


\title[Radio Number of Stacked-Book Graph]{Bounds of the Radio Number of Stacked-Book Graph with Odd Paths}


\author[T.C Adefokun]{Tayo Charles Adefokun$^1$ }
\address{$^1$Department of Computer and Mathematical Sciences,
\newline \indent Crawford University,
\newline \indent Nigeria}
\email{tayoadefokun@crawforduniversity.edu.ng}

\author[D.O. Ajayi]{Deborah Olayide Ajayi$^2$}
\address{$^2$Department of Mathematics,
	\newline \indent University of Ibadan,
	\newline \indent Ibadan,
	\newline \indent Nigeria}
\email{olayide.ajayi@mail.ui.edu.ng; adelaideajayi@yahoo.com}





\keywords{L(1,1)-labeling, D-2 Coloring, Direct Product of Graphs, Cross Product of Graphs \\
\indent 2010 {\it Mathematics Subject Classification}. Primary: 05C35}

\begin{abstract} 
A Stacked-book graph $G_{m,n}$ is obtained from the Cartesian product of a star graph $S_m$ and a path $P_n$, where $m$ and $s$ are the orders of the star graph and the path respectively. Obtaining the radio number of a graph is a rigorous process, which is dependent on diameter of $G$ and positive difference of non-negative integer labels $f(u)$ and $f(v)$ assigned to any two $u,v$ in the vertex set $V(G)$ of $G$. This paper obtains tight upper and lower bounds of the radio number of $G_{m,n}$ where the path $P_n$ has an odd order. The case where $P_n$ has an even order has been investigated. 
\end{abstract}


\maketitle


\section{Introduction}
All graphs mentioned in this work are simple and undirected. The vertex and edge sets of a graph $G$ are designated as $V(G)$ and $E(G)$ respectively and $e=uv \in E(G)$ connects $u,v \in V(G)$ while $d(u,v)$ denotes the shortest distance between $u,v \in V(G)$. We represent the diameter of $G$ as $\rm{diam}(G)$. 

Radio labeling, which often aims to solve signal interference problems in a wireless network, was first suggested in 1980 by Hale \cite{Hale} and it is described as follows: Suppose that $f$ is a non negative integer function on $V(G)$ and that $f$ satisfies the radio labeling condition, $|f(u)-f(v)| \geq \rm{diam} (G)+1-d(u,v)$, for every pair $u,v \in V(G)$. The span$f$ of $f$ is $f_{max}(G)-f_{min}(G)$, where $f_{max}$ and $f_{min}$ are largest and lowest labels, respectively, assigned on $V(G)$ and the lowest value of span$f$ is the radio number, $rn(G)$, of $G$.  It is established that to obtain the radio numbers of graphs is hard. However, for certain graphs, the radio numbers have been obtained. Recent results on radio number include those on middle graph of path \cite{BD1}, trees, \cite{BD2} and edge-joint graphs \cite{NSS1}. Liu and Zhu \cite{LZ1} showed that  for path, $P_n$, $n \geq 3$, 
\begin{center}

$rn(P_n) = \left\{
\begin{array}{ll}
            2k(k-1)+1 &  \mbox{if} \;\; n=2k;   \\
             2k^2+2 &  \mbox{if} \; \; n=2k+1.
					
\end{array}
\right.$

\end{center}
 Liu and Zhu's results compliment those obtained by Chatrand et. al. in   \cite{CEHZ1} and \cite{CEZ1} about the same graph. Liu and Xie worked on square graphs. In \cite{LX2}, they obtained $rn(P_n^2)$ of square of path as follows:

\begin{center}

$rn(P^2_n) = \left\{
\begin{array}{ll}
            k^2+2 &  \mbox{if} \;\; n \equiv 1 (\rm{mod} \; 4), n \geq 9;   \\
             k^2+1 &  \mbox{if} \; \; \rm{otherwise}.
					
\end{array}
\right.$

\end{center}
Other results on squares of graphs include those obtained for $C_n^2$ in \cite{LX1}, where $C_n$ is a cycle of order $n$. On Cartesian products graphs, Jiang \cite{J1} solved the radio number problem for $(P_m \Box P_n)$, where for $m,n > 2$, and obtains $rn(P_m \Box P_n)=\frac{mn^2+nm^2-n}{2}-mn-m+2$, for $m$ odd and $n$ even. Saha and Panigrahi \cite{SP1} worked on Cartesian products of Cycles while Ajayi and Adefokun  in \cite{AA1} and \cite{AA2} probe on the radio number of the Cartesian product of path and star graph called the stacked-book graph $G=S_m \Box P_n$. They observed in \cite{AA1} that $rn(S_m \Box P_n) \leq n^2m+1$, a result the authors noted, citing a existing result in \cite{J1},  is not a tight bound.)In \cite{AA1}, they obtained improve the results in \cite{AA1}, for path $P_n$, where $n$ is even. 

In this paper, we investigate further on the radio number of stacked-book graphs, $S_m \Box P_n$, in the case where $n$ is odd and combined with \cite{AA2}, we improve the weak bounds obtained in \cite{AA1}.


\section{Preliminaries}
Let $S_m$ be a star of order $m \geq 3$ and let $v_1$ be the center vertex of $S_m$ and $v_2, v_3, \cdots, v_m$ are adjacent to $v_1$ and let $P_n$ be a path containing $n$ vertices starting from $u_1$ to $u_n$. Furthermore, $P=u\stackrel{a}{\longrightarrow}v\stackrel{b}{\longrightarrow}w$ represents a path of length $a+b$, for which $d(u,v)=a$ and $d(u,w)=b$. If a stacked-book graph is obtained from the Cartesian product $G_{m,n}=S_m \Box P_n$ of $S_m$ and $P_n$, then $V(G_{m,n})$ is the Cartesian product of $V(S_m)$ and $V(P_n)$, for which if $u_iv_j \in V(G_{m,n})$, then $u_i\in V(S_m)$, $v_j \in E(P_n)$, while, if $u_iv_j u_kv_i$ forms an edge in $E(G_{m,n})$, then $u_i=u_k$ and $v_jv_l \in E(P_n)$ or $v_j=v_l$ and $u_iu_k \in E(S_m)$.  

Some of the following are adopted from \cite{AA2}.

\begin{definition}
Where it is convenient, we write $u_iv_j$ as $u_{ij}$ and therefore the edge $u_iv_j u_kv_l$ as $u_{ij}u_{kl}$.  
\end{definition} 

\begin{remark}
Stacked-book graph $G_{m,n}$ contains $n$ number of $S_m$ stars as contained in the set $\left\{S_{m(i)}:i \leq 0 \leq n\right\}$.
\end{remark}
\begin{definition}
For $G_{m,n}=S_m \Box P_n$, $V_{(i)} \subset V(G_{m,n})$ is the set of vertices on $S_{m(i)}$ stated as $V_{(i)}={u_1v_i, u_2v_i, \cdots, u_mv_i}$. 
\end{definition}
\begin{remark}
We must mention that $u_1v_i$ in the set in the last definition is the center vertex of $S_m(i)$.
\end{remark}
\begin{definition}
Let $G_{m,n}=S_m \Box P_n$, $n$ odd, the pair ${S_{m(i)}}, S_{m(i+\frac{n-1}{2})}$ is a subgraph $G''(i)\subseteq G_{m,n}$, which is induced by $V_i$ and $V_{i+\frac{n-1}{2}}$, where $i \notin \lbrace 1, \frac{n+1}{2},n\rbrace$.
\end{definition}
\begin{remark}
It can be seen that with $n$ odd, every $G_{m,n}$ contains $\frac{n-2}{3}$ number of $G''(i)$ subgraphs and the diameter $diam(G''(i))$ of $G''(i)$ is $\frac{n+3}{3}$.
\end{remark}

Next, we introduce the following definitions:

\begin{definition}
Let $G_{m,n}=S_m \Box P_n$. Then, $\bar{G}_{m,n} \subseteq G_{m,n}$ is a subgraph of $G_{m,n}$ induced by the stars ${S_{m(1)}, S_{m(\frac{n+1}{2})}, S_n}$.
\end{definition}
We now define a class of paths $P'(i)$.

\begin{definition}
Let $\left\{P'(t)\right\}^m_{t=1}$ be a class of paths in $G_{m,n}$, where $P'(t):=v_{j(1)}\stackrel{\alpha}{\longrightarrow}v_{k(\frac{n+1}{2})}\stackrel{\beta}{\longrightarrow}v_{l(n)}$, such that $j\neq k \neq l$, $v_{j(1)} \in V_{(i)}, v_{k(\frac{n+1}{2})} \in V_{(\frac{n+1}{2})}$ and $v_{l(n)} \in V_{(n)}$ and $1 \leq j, k, l \leq m$.
\end{definition}

It can be verified that $\left\{P'(t)\right\}^m_{t=1}$ contains two other sub-classes defined without loss of generality as follows:

$P'_1(t)=\lbrace v_{1(1)}\stackrel{\frac{n+1}{2}}{\longrightarrow}v_{3(\frac{n+1}{2})}\stackrel{\frac{n+3}{2}}{\longrightarrow}v_{2(n)},
v_{2(1)}\stackrel{\frac{n+1}{2}}{\longrightarrow}v_{1(\frac{n+1}{2})}\stackrel{\frac{n+1}{2}}{\longrightarrow}v_{3(n)},
v_{3(1)}\stackrel{\frac{n+3}{2}}{\longrightarrow}v_{2(\frac{n+1}{2})} \stackrel{\frac{n+1}{2}}{\longrightarrow}v_{1(n)} \rbrace$

$P'_2(t)=v_{a(1)}\stackrel{\frac{n+3}{2}}{\longrightarrow}v_{b(\frac{n+1}{2})}\stackrel{\frac{n+3}{2}}{\longrightarrow}v_{c(n)}$, $a \neq b \neq c$, $4 \leq a,b,c \leq m$. Clearly, $|P'_1(t)|=3$ and $|P'_2(t)|=m-2$.
       
\section{Results}

In the next results, we establish a lower bound of the radio number $rn(G_{m,n})$ of a stacked-book graph $G_{m,n}$.

\begin{lemma}\label{lem1}
Let $f$ the radio labeling function on $G_{m,n}$, $n$ odd, and let $V_{(\frac{n+1}{2})}=\left\{v_{1(\frac{n+1}{2})}, v_{2(\frac{n+1}{2})}, v_{3(\frac{n+1}{2})}, \left\{v_{d(\frac{n+1}{2}):4 \leq d \leq m)}\right\} \right\}$ be the vertex set of the mid vertices in $P(t) \subseteq \left\{P'(t)\right\}^m_{t=1}$.  Now, let $v\in V_{\frac{n+1}{2}}$ be some vertex in $V_{\frac{n+1}{2}}$. If $f(v)$ is $f_{max}$ on $V(P(t))$, then

\begin{center}

$f(v) = \left\{
\begin{array}{ll}
            \frac{n+5}{2} &  \mbox{if} \;\; v\in \left\{v_{1(\frac{n+1}{2})}, v_{2(\frac{n+1}{2})} , v_{3(\frac{n+1}{2})} \right\};   \\
             \frac{n+3}{2} &   \rm{otherwise}.
					
\end{array}
\right.$

\end{center}   
\end{lemma}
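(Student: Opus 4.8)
My plan is to exploit the fact that $G_{m,n}=S_m \Box P_n$ has $\operatorname{diam}(G_{m,n})=n+1$ (the diameter of a Cartesian product is the sum of the factor diameters, here $\operatorname{diam}(S_m)=2$ and $\operatorname{diam}(P_n)=n-1$), so the radio condition reads $|f(x)-f(y)|\ge n+2-d(x,y)$ for all vertices $x,y$. Since each $P(t)$ is a triple of vertices, one in $V_{(1)}$, one in $V_{(\frac{n+1}{2})}$, and one in $V_{(n)}$, the statement reduces to a finite optimization over the three labels on that triple. I would normalize so that the least of the three labels on $V(P(t))$ is $0$; then $f(v)$ equals the span of the triple, and the lemma asserts the smallest value this span can take when the middle vertex $v$ carries the maximum.

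First I would record the three pairwise distances in each triple from $d(u_iv_p,u_kv_q)=d_{S_m}(u_i,u_k)+|p-q|$, where $d_{S_m}$ equals $1$ for a center–leaf pair and $2$ for a leaf–leaf pair. Writing the endpoints as $x\in V_{(1)}$, $z\in V_{(n)}$ and setting $\delta_{ab}:=n+2-d(a,b)$, this gives $\delta_{xz}=2$ when exactly one endpoint is a center and $\delta_{xz}=1$ when $x,z$ are distinct leaves, together with the values of $\delta_{xv},\delta_{vz}$ obtained as $n+2$ minus the lengths $\alpha,\beta$ already displayed in the definitions of $P'_1(t)$ and $P'_2(t)$.

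Next, since $f(v)=f_{\max}$ on $V(P(t))$, the label $0$ is attained at one endpoint. If it is attained at $x$, then $f(z)\ge \delta_{xz}$, whence $f(v)\ge \max\{\delta_{xv},\ \delta_{xz}+\delta_{vz}\}$, and the mirror inequality holds if the minimum sits at $z$. Minimizing over the two choices yields
\[
f(v)=\min\bigl\{\max\{\delta_{xv},\delta_{xz}+\delta_{vz}\},\ \max\{\delta_{vz},\delta_{xz}+\delta_{xv}\}\bigr\}.
\]
Substituting the distance data, each of the three labelings comprising $P'_1(t)$ (using the center and two leaves, with the smaller mid distances) produces $\frac{n+5}{2}$, while the generic labeling of $P'_2(t)$ (three distinct leaves, $\delta_{xz}=1$, $\delta_{xv}=\delta_{vz}=\frac{n+1}{2}$) produces $\frac{n+3}{2}$. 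I would then confirm tightness by exhibiting explicit labels (for instance $0$, $\delta_{xz}$, $\frac{n+5}{2}$ on $x,z,v$ in the $P'_1$ case) that satisfy all three constraints, making the stated value both a lower bound and attainable.

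The main obstacle I anticipate is the bookkeeping of the case analysis rather than any single computation. Because the maximum is assumed at the middle vertex, one must check for each of the three labelings in $P'_1(t)$, and for the generic labeling in $P'_2(t)$, which endpoint should carry the minimum so that the binding constraint is $f(v)\ge \delta_{xz}+\delta_{vz}$ (or its mirror) rather than the weaker $f(v)\ge\delta_{xv}$. The delicate point is that it is precisely the leaf–leaf star distance $d_{S_m}=2$ versus the center–leaf distance $d_{S_m}=1$, feeding through $\delta_{xz}$ and the mid distances, that separates $\frac{n+5}{2}$ from $\frac{n+3}{2}$; getting both cases to collapse to the stated constants, and verifying attainability by the explicit labeling, is where the care is required.
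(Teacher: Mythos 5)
Your proposal is correct and follows essentially the same route as the paper: it uses $\operatorname{diam}(G_{m,n})=n+1$, computes the pairwise distances in each triple via the Cartesian-product distance formula, and chains the radio condition from the endpoint labeled $0$ through the other endpoint to the middle vertex, splitting into the $P'_1(t)$ (center involved) and $P'_2(t)$ (all leaves) cases exactly as the paper does. Your explicit $\min$--$\max$ formulation over which endpoint receives the label $0$ is a slightly more systematic packaging of the paper's case-by-case choice (the paper simply picks the favorable endpoint and remarks parenthetically that the other choice gives $\frac{n+7}{2}$), but it is not a different argument.
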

\begin{proof}
Since $P(t) \subset G_{m,n}$, then, radio labeling of any vertex on $V(P(t))$ is based on $diam(G_{m,n})$ and for $u,v \in V(P(t))$, $d(u,v)=k$, where $k$ is the distance between $u$ and $v$ in $G_{m,n}$. We consider the three paths in $P'_1(t)$ next.

{\bf{Case 1}}(a). For $P'_1(1):=v_{1(1)}\stackrel{\frac{n+1}{2}}{\longrightarrow}v_{3(\frac{n+1}{2})}\stackrel{\frac{n+3}{2}}{\longrightarrow}v_{2(n)}$, let $f(v_{1(1)})=0$. Now $d(v_{1(1)},v_{2(n)})=n$. Therefore $f(v_{2(n)}) \geq f(v_{1(1)})+diam(G_{m,n})+1-n$=2. Also, $d(v_{2(n)},v_{3(\frac{n+1}{2})})=\frac{n+3}{2}$ and thus, $f(v_{3(\frac{n+1}{2})}) \geq f(v_{2(n)})+dim(G_{m,n})+1-\frac{n+3}{2} \geq \frac{n+5}{2}$. (It should be note that if we set $f(v_{2(n)})=0,$ then, $f(v_{3(\frac{n+1}{2})}) \geq \frac{n+7}{2}$.)

{\bf{Case 1}}(b). For $P'_1(2):=v_{2(1)}\stackrel{\frac{n+1}{2}}{\longrightarrow}v_{1(\frac{n+1}{2})}\stackrel{\frac{n+1}{2}}{\longrightarrow}v_{3(n)}$, let $f(v_{2(1)})=0$, then $d(v_{2(1)},v_{3(n)})=n+1$ and thus, $f(v_3(n)) \geq n+2-(n+1)=1$. Likewise, $d(v_{3(n)},v_{1(\frac{n+1}{2})})=\frac{n+2}{2}$ and therefore, $f(v_{3(n)}) \geq n+3-(\frac{n+1}{2})=\frac{n+5}{2}$.

\begin{center}
	\pgfdeclarelayer{nodelayer}
	\pgfdeclarelayer{edgelayer}
	\pgfsetlayers{nodelayer,edgelayer}
	\begin{tikzpicture}
	\begin{pgfonlayer}{nodelayer}

	\node [minimum size=0cm,draw,circle] (0) at (2,0.5) {\tiny{}};
	\node [minimum size=0cm,draw,circle] (1) at (5,0.5) {\tiny{}};
	\node [minimum size=0cm,draw,circle] (2) at (8,0.5) {\tiny{}};
	\node [minimum size=0cm,draw,circle] (3) at (11,0.5) {\tiny{}};
	\node [minimum size=0cm,draw,circle] (4) at (14,0.5) {\tiny{1}};
	
	\node [minimum size=0cm,draw,circle] (5) at (0,1) {\tiny{}};
	\node [minimum size=0cm,draw,circle] (6) at (3,1) {\tiny{}};
	\node [minimum size=0cm,draw,circle] (7) at (6,1) {\tiny{}};
	\node [minimum size=0cm,draw,circle] (8) at (9,1) {\tiny{}};
	\node [minimum size=0cm,draw,circle] (9) at (12,1) {\tiny{}};
	
	\node [minimum size=0cm,draw,circle,red] (10) at (1,2) {\tiny{0}};
	\node [minimum size=0cm,draw,circle] (11) at (4,2) {\tiny{}};
	\node [minimum size=0cm,draw,circle] (12) at (7,2) {\tiny{5}};
	\node [minimum size=0cm,draw,circle] (13) at (10,2) {\tiny{}};
	\node [minimum size=0cm,draw,circle] (14) at (13,2) {\tiny{}};
	
	\node [minimum size=0cm,draw,circle] (15) at (0,3) {\tiny{0}};
	\node [minimum size=0cm,draw,circle] (16) at (3,3) {\tiny{}};
	\node [minimum size=0cm,draw,circle] (17) at (6,3) {\tiny{}};
	\node [minimum size=0cm,draw,circle] (18) at (9,3) {\tiny{}};
	\node [minimum size=0cm,draw,circle,red] (19) at (12,3) {\tiny{2}};
	
	\node [minimum size=0cm,draw,circle] (20) at (2,3.5) {\tiny{}};
	\node [minimum size=0cm,draw,circle] (21) at (5,3.5) {\tiny{}};
	\node [minimum size=0cm,draw,circle,red] (22) at (8,3.5) {\tiny{5}};
	\node [minimum size=0cm,draw,circle] (23) at (11,3.5) {\tiny{}};
	\node [minimum size=0cm,draw,circle] (24) at (14,3.5) {\tiny{}};

	\node [minimum size=0] (25) at (6,-1) {Figure 1. Illustration of Case 1(a) and (b) in a $G_{5,5}$ Stacked-book Graph };
	
	\end{pgfonlayer}
	\begin{pgfonlayer}{edgelayer}
	\draw [thin=1.00] (0) to (1);
	\draw [thin=1.00] (1) to (2);
      \draw [line width=0.5mm, blue] (2) to (3);
	\draw [line width=0.5mm, blue] (3) to (4);
	
	\draw [thin=1.00] (5) to (6);
	\draw [thin=1.00] (6) to (7);
	\draw [thin=1.00] (7) to (8);
	\draw [thin=1.00] (8) to (9);
	
	\draw [thin=1.00] (10) to (11);
	\draw [thin=1.00] (11) to (12);
	\draw [thin=1.00] (12) to (13);
	\draw [thin=1.00] (13) to (14);

	\draw [line width=0.5mm, blue] (15) to (16);
	\draw [line width=0.5mm, blue] (16) to (17);
	\draw [thin=1.00] (17) to (18); 
	\draw [thin=1.00] (18) to (19);
	
	\draw [line width=0.5mm, red] (20) to (21);
	\draw [line width=0.5mm, red] (21) to (22);
	\draw [line width=0.5mm, red] (22) to (23);
	\draw [line width=0.5mm, red] (23) to (24);
	
	\draw [thin=1.00] (10) to (5);
	\draw [thin=1.00] (10) to (15);
	\draw [thin=1.00] (10) to (0);
	\draw [line width=0.5mm, red] (10) to (20);
	
	\draw [thin=1.00] (11) to (16);
	\draw [thin=1.00] (11) to (6);
	\draw [thin=1.00] (11) to (1);
	\draw [thin=1.00] (11) to (21);
	
	\draw [line width=0.5mm, blue] (12) to (17);
	\draw [thin=1.00] (12) to (7);
	\draw [line width=0.5mm, blue] (12) to (2);
	\draw [thin=1.00] (12) to (22);
	
	\draw [thin=1.00] (13) to (18);
	\draw [thin=1.00] (13) to (8);
	\draw [thin=1.00] (13) to (3);
	\draw [thin=1.00] (13) to (23);
	
	\draw [line width=0.5mm, red] (14) to (19);
	\draw [thin=1.00] (14) to (9);
	\draw [thin=1.00] (14) to (4);
	\draw [line width=0.5mm, red] (14) to (24);

	\end{pgfonlayer}
	\end{tikzpicture}
	
\end{center}

{\bf{Case 1}}(c). Now for $P'_1(3):=v_{3(1)}\stackrel{\frac{n+3}{2}}{\longrightarrow}v_{2(\frac{n+1}{2})}\stackrel{\frac{n+1}{2}}{\longrightarrow}v_{1(n)}$, we assume $f(v_{1(n)})=0$. Also, $d(v_{3(1)},v_1(n))=n+1$ in $G_{m,n}$. Thus, $f(v_{3(1)}) \geq 2$ and since $d(v_{3(1)},v_{2{\frac{n+1}{2}}}) \geq \frac{n+3}{2}$, then, $f(v_{2(\frac{n+1}{2})}) \geq 2+n+2-(\frac{n+3}{2}) \geq \frac{n+5}{2}$.

Next we consider the paths in $P'_2(t)$.

{\bf{Case 2}}. Every path in $P'_2(t)$ are geometrically similar and are of the form $P'_2(4)=v_{a(1)}\stackrel{\frac{n+3}{2}}{\longrightarrow}v_{b(\frac{n+1}{2})}\stackrel{\frac{n+3}{2}}{\longrightarrow}v_{n(n)}$, such that $d(v_{a(1)},v_{c(n)})=n+1$ and $d(v_{c(n)},v_b(\frac{n+1}{2}))=\frac{n+3}{2}$, in $G_{m,n}$ and for all $a\neq b \neq c \neq m$, without loss of generality. Thus, suppose that $f(v_{a(1)}) = 0$, then $f(v_c(n)) \geq 1$ and $f(v_{b(\frac{n+1}{2})}) \geq \frac{n+3}{2}$.

\end{proof}

\begin{center}
	\pgfdeclarelayer{nodelayer}
	\pgfdeclarelayer{edgelayer}
	\pgfsetlayers{nodelayer,edgelayer}
	\begin{tikzpicture}
	\begin{pgfonlayer}{nodelayer}

	\node [minimum size=0cm,draw,circle,red] (0) at (2,0.5) {\tiny{0}};
	\node [minimum size=0cm,draw,circle] (1) at (5,0.5) {\tiny{}};
	\node [minimum size=0cm,draw,circle] (2) at (8,0.5) {\tiny{}};
	\node [minimum size=0cm,draw,circle] (3) at (11,0.5) {\tiny{}};
	\node [minimum size=0cm,draw,circle] (4) at (14,0.5) {\tiny{}};
	
	\node [minimum size=0cm,draw,circle] (5) at (0,1) {\tiny{0}};
	\node [minimum size=0cm,draw,circle] (6) at (3,1) {\tiny{}};
	\node [minimum size=0cm,draw,circle] (7) at (6,1) {\tiny{}};
	\node [minimum size=0cm,draw,circle] (8) at (9,1) {\tiny{}};
	\node [minimum size=0cm,draw,circle,red] (9) at (12,1) {\tiny{1}};
	
	\node [minimum size=0cm,draw,circle] (10) at (1,2) {\tiny{}};
	\node [minimum size=0cm,draw,circle] (11) at (4,2) {\tiny{}};
	\node [minimum size=0cm,draw,circle] (12) at (7,2) {\tiny{5}};
	\node [minimum size=0cm,draw,circle] (13) at (10,2) {\tiny{}};
	\node [minimum size=0cm,draw,circle] (14) at (13,2) {\tiny{}};
	
	\node [minimum size=0cm,draw,circle] (15) at (0,3) {\tiny{}};
	\node [minimum size=0cm,draw,circle] (16) at (3,3) {\tiny{}};
	\node [minimum size=0cm,draw,circle, red] (17) at (6,3) {\tiny{4}};
	\node [minimum size=0cm,draw,circle] (18) at (9,3) {\tiny{}};
	\node [minimum size=0cm,draw,circle] (19) at (12,3) {\tiny{}};
	
	\node [minimum size=0cm,draw,circle] (20) at (2,3.5) {\tiny{}};
	\node [minimum size=0cm,draw,circle] (21) at (5,3.5) {\tiny{}};
	\node [minimum size=0cm,draw,circle] (22) at (8,3.5) {\tiny{}};
	\node [minimum size=0cm,draw,circle] (23) at (11,3.5) {\tiny{}};
	\node [minimum size=0cm,draw,circle] (24) at (14,3.5) {\tiny{1}};

	\node [minimum size=0] (25) at (6,-1) {Figure 1. Illustration of Case 1(c) and Case 2 in a $G_{5,5}$ Stacked-book Graph };
	
	\end{pgfonlayer}
	\begin{pgfonlayer}{edgelayer}
	\draw [thin=1.00] (0) to (1);
	\draw [thin=1.00] (1) to (2);
	\draw [thin=1.00] (2) to (3);
	\draw [thin=1.00] (3) to (4);
	
	\draw [line width=0.5mm, green] (5) to (6);
	\draw [line width=0.5mm, green] (6) to (7);
	\draw [thin=1.00] (7) to (8);
	\draw [thin=1.00] (8) to (9);
	
	\draw [thin=1.00] (10) to (11);
	\draw [thin=1.00] (11) to (12);
	\draw [thin=1.00] (12) to (13);
	\draw [thin=1.00] (13) to (14);
	
	\draw [line width=0.5mm, purple] (15) to (16);
	\draw [line width=0.5mm, purple] (16) to (17);
	\draw [line width=0.5mm, purple] (17) to (18); 
	\draw [line width=0.5mm, purple] (18) to (19);
	
	\draw [thin=1.00] (20) to (21);
	\draw [thin=1.00] (21) to (22);
	\draw [line width=0.5mm, green] (22) to (23);
	\draw [line width=0.5mm, green] (23) to (24);
	
	\draw [thin=1.00] (10) to (5);
	\draw [line width=0.5mm, purple] (10) to (15);
	\draw [line width=0.5mm, purple] (10) to (0);
	\draw [thin=1.00] (10) to (20);
	
	\draw [thin=1.00] (11) to (16);
	\draw [thin=1.00] (11) to (6);
	\draw [thin=1.00] (11) to (1);
	\draw [thin=1.00] (11) to (21);
	
	\draw [thin=1.00] (12) to (17);
	\draw [line width=0.5mm, green] (12) to (7);
	\draw [thin=1.00] (12) to (2);
	\draw [line width = 0.5mm, green] (12) to (22);
	
	\draw [thin=1.00] (13) to (18);
	\draw [thin=1.00] (13) to (8);
	\draw [thin=1.00] (13) to (3);
	\draw [thin=1.00] (13) to (23);
	
	\draw [line width=0.5mm, purple] (14) to (19);
	\draw [line width=0.5mm, purple] (14) to (9);
	\draw [thin=1.00] (14) to (4);
	\draw [thin=1.00] (14) to (24);

	\end{pgfonlayer}
	\end{tikzpicture}
	
\end{center}

\begin{remark}\label{rem1}
In (a) and (c) of Case 1, if the respective center vertices $v_{1(1)}$ and $v_{1(n)}$ of stars $S_{(1)}$ and $S_{(n)}$ are labeled $f(v_{1(1)})=f(v_{1(n)})=0$, the radio labels on the mid vertices of their paths would be at least $\frac{n+7}{2}$. 
\end{remark}

\begin{remark}\label{rem2}
For the $m$ paths in $\left\{P'{t}\right\}^m_{t=1}$, the sum of  all the radio labels on the center vertices ($span(f)$ of $f$ on $P'(t)$ is $3(\frac{n+5}{2})+(m-3)(\frac{n+3}{2})=\frac{1}{2}(mn+3m+6)$.
\end{remark}

Next, we obtained a lower bound for $\left\{P(t)\right\}^m_{t=1}$.

\begin{remark}\label{rem3}
From Remark \ref{rem2}, we notice that for optimum labeling of the three vertices on each of the paths in $\left\{P(t)\right\}^m_{t=1}$, the end vertex, which closest to the mid vertex is most suitable to be labeled first. These are $v_{1(1)} \in P'_1(1), v_1(n) \in P'_1(3)$ and any end vertex in the remaining paths. We refer to each of these ends vertices as initial label vertex.
\end{remark}

\begin{lemma}\label{lem2}
Let $G(*)$ be a subgraph of $G_{m,n}$, induced by all the end point vertices and the midpoint vertices of $\left\{P'_1(t)\right\}^m_{t=1}$ i.e $S_{m(1)}, S_{m(\frac{n+1}{2})}, S_{m(n)}$. Then $rn(G(*)) \geq \frac{1}{2}(2mn+4m-n+5)$ in $G_{m,n}$.   
\end{lemma}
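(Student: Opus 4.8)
The plan is to use the standard gap-summation bound for radio numbers, together with the fact that in a Cartesian product the distance splits additively. First I record that $\operatorname{diam}(G_{m,n})=(n-1)+2=n+1$ (the path $P_n$ contributes $n-1$ and the star $S_m$ contributes $2$), which is the value already used in the proof of Lemma \ref{lem1}; thus any radio labeling $f$ obeys $|f(u)-f(v)|\ge n+2-d(u,v)$. Fix an arbitrary radio labeling $f$ of $G(*)$ and list its $3m$ vertices as $x_0,x_1,\dots,x_{3m-1}$ in increasing order of label with $f(x_0)=0$. Telescoping and applying the radio inequality to each consecutive pair gives
\[
\operatorname{span}(f)=f(x_{3m-1})=\sum_{i=1}^{3m-1}\bigl(f(x_i)-f(x_{i-1})\bigr)\ge (3m-1)(n+2)-\sum_{i=1}^{3m-1}d(x_{i-1},x_i).
\]
So it suffices to prove the purely metric estimate $D:=\sum_{i=1}^{3m-1}d(x_{i-1},x_i)\le 2mn+4m-\tfrac{n}{2}-\tfrac{9}{2}$ for \emph{every} ordering of the $3m$ vertices; inserting this into the display and taking the minimum over $f$ yields exactly $rn(G(*))\ge\tfrac12(2mn+4m-n+5)$.

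To bound $D$ I would write $d(u,v)=d_{P_n}(u,v)+d_{S_m}(u,v)$, so that $D=\Pi+\Sigma$, where $\Pi$ collects the path (``position'') parts and $\Sigma$ the star parts of the consecutive distances. Since the $3m$ vertices sit on the three fibres $V_{(1)},V_{(\frac{n+1}{2})},V_{(n)}$, each vertex has position $1$, $\frac{n+1}{2}$ or $n$, with position-jumps $\frac{n-1}{2}$ between neighbouring fibres and $n-1$ between the outer two. A total-variation estimate on the multiset consisting of $m$ copies of each position gives $\Pi\le 2m(n-1)$, and equality forces every outer-fibre vertex to be a strict interior local extremum of the position sequence and both list-endpoints $x_0,x_{3m-1}$ to lie in the middle fibre $V_{(\frac{n+1}{2})}$. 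Independently, writing $\Sigma$ as the sum over the $m-1$ leaf-edges of $S_m$ of the number of consecutive pairs that separate a leaf-class from the rest, one gets $\Sigma\le 6(m-1)$, with equality forcing the $3(m-1)$ leaf-vertices into one contiguous block and the three centres to the two ends of the list.

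The main obstacle — and essentially the only delicate point — is that these two extremal configurations are incompatible, so one must pin down the exact unavoidable loss when $\Pi$ and $\Sigma$ are maximised together. Maximising $\Sigma$ wants centre vertices at the two list-endpoints, while maximising $\Pi$ wants middle-fibre vertices there; since $u_{1(\frac{n+1}{2})}$ is the unique vertex that is simultaneously a centre and a middle-fibre vertex, at most one endpoint can meet both demands. I would therefore run a bookkeeping argument: moving an outer-fibre vertex to an endpoint, or displacing a middle-fibre vertex from an endpoint, changes position coefficients in steps of $\frac{n-1}{2}$ in $\Pi$ while altering $\Sigma$ only by $O(1)$, and one shows the best achievable compromise is
\[
D\le 2m(n-1)+6(m-1)-\tfrac{n-3}{2}=2mn+4m-\tfrac{n}{2}-\tfrac{9}{2}.
\]
The peak structure of Lemma \ref{lem1}, which singles out the middle-fibre vertices as the largest-labelled (hence turning) vertex on each path, is what makes this accounting natural, and it mirrors the split between the three special paths $P'_1(t)$ and the $m-3$ generic paths $P'_2(t)$. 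Proving that the deficit is \emph{exactly} $\frac{n-3}{2}$ (neither more nor less) is the step I expect to absorb almost all of the work; once this metric estimate holds, the lemma is immediate from the gap inequality of the first paragraph.
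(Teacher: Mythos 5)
Your overall strategy (order the vertices by label, telescope the consecutive gaps, and reduce the lemma to a purely metric bound on $D=\sum_i d(x_{i-1},x_i)$) is the standard one and is quite different from the paper's proof, which instead accounts additively over the $m$ tripod paths $P'(t)$ using Remark \ref{rem2} plus the $(m-1)$ pairs at distance $\frac{n+1}{2}$. Unfortunately your key metric estimate is false for $n\geq 7$, so the argument has a genuine gap. The two bounds $\Pi\le 2m(n-1)$ and $\Sigma\le 6(m-1)$ are correct, and you correctly observe that they cannot both be attained; but the unavoidable deficit is only $1$, a constant, not $\frac{n-3}{2}$. The cheapest compromise is \emph{not} to move an outer-fibre vertex to a list-endpoint (that costs $\frac{n-1}{2}$ in $\Pi$); it is to keep both endpoints in the middle fibre, taking one of them to be a leaf vertex, which preserves $\Pi=2m(n-1)$ and costs only $1$ in $\Sigma$. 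Concretely, for $m=4$, $n=7$ (fibres at positions $1,4,7$; write $c$ for the star centre and $a,b,d$ for the leaves, with subscripts denoting the fibre), the ordering
\[
c_4,\;a_7,\;b_4,\;d_1,\;b_7,\;c_1,\;c_7,\;d_4,\;a_1,\;d_7,\;b_1,\;a_4
\]
has consecutive distances $4,5,5,8,7,6,4,5,8,8,5$, so $D=65$, whereas your claimed bound is $2mn+4m-\frac{n}{2}-\frac{9}{2}=64$. The telescoping inequality then only yields $\operatorname{span}(f)\ge (3m-1)(n+2)-65=34$, one short of the required $\frac{1}{2}(2mn+4m-n+5)=35$.

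This does not show the lemma is false, only that first-order gap summation cannot reach it: for orderings like the one above you must exploit the radio condition on pairs that are \emph{two} apart in the label order, i.e.\ $f(x_{i+1})-f(x_{i-1})\ge n+2-d(x_{i-1},x_{i+1})$, which is the standard Liu--Zhu refinement for recovering exactly this kind of $O(n)$ loss. Alternatively you could follow the paper's route and lower-bound the labels forced on the $m$ mid-fibre vertices path by path (as in Lemma \ref{lem1} and Remark \ref{rem2}) before adding the contribution of the remaining pairs. As written, the claim that the deficit is exactly $\frac{n-3}{2}$ is the step that fails, and with it the proof.
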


\begin{proof}
Let $v_1$ and $v_2$ be center vertices on $S_{m(1)}$ and $S_{m(n)}$ respectively. There exist vertices $u_{\alpha},u_{\beta} \in S_{m(\frac{n+1}{2})}$, $\alpha \neq \beta$, $u_{\alpha}, u_{\beta}$ not center vertices of $S_{m(\frac{n+1}{2})}$ such that $d(v_1,u_{\alpha})=d(v_2,u_{\beta})=\frac{n+1}{2}$. Also, there exists a subset $A=\left\{\omega_r \right\}$ in $S_{m(1)}$, (or $S_{m(n)}$) such that $|A|=m-3$,  and $B=\left\{x_s\right\}$ in $S_{m(\frac{n+1}{2})}$, $|B|=m-1$, such that for $r \neq s$, $d(\omega_r,x_s)=\frac{n+3}{2}$. Now, the sum of $span(f)$ of $f$ for all the pair ($\omega_r , x_s$) will be  $(m-1)(\frac{n+1}{2})= \frac{1}{2}(mn+m-n-1)$ and thus, 

\begin{eqnarray}
rn(G(*)) & \geq & \frac{1}{2}(mn+m-n-1)+ \frac{1}{2} mn+3m+6] \nonumber \\
& \geq & \frac{1}{2}(2mn+4m-n+5). \nonumber
\end{eqnarray}

\end{proof}
We extend the result in Lemma \ref{lem2} in other to obtain a lower bound for the radio number of stacked book graph $G_{m,n}$, with off $n \geq 5$

\begin{definition}
Let $G_{m,n}$ be a stacked-book graph with odd $n$, $n \geq 5$, and $m\geq 4$. Also, let $G(*)$ as defined earlier. We define subgraph $(G(**))$ of $G_{m,n}$ as $G(**)=G_{m,n}\backslash G(*)$.
\end{definition}

\begin{remark}\label{rem4}
We can see that $G(**)$ is a subgraph of $G_{m,n}$, induced by $\left\{S_{m(i)}\right\}^{n-1}_{i=2}$, $i \neq \frac{n+1}{2}$.
\end{remark}

\begin{definition}
Let $G''(t) \subseteq G(**)$ be a subgraph of $G_{m,n}$, such that $G''(t)$ is induced by $S_{m(t)}$ and $S_{m(t+\frac{n-1}{2})}$.
\end{definition}

\begin{remark}\label{rem5}
It should be noted that $G(**) \subset G_{m,n}$ contains exactly $\frac{n-3}{2}$ $G''(t)$ subgraphs.
\end{remark}

\begin{remark}\label{rem6}
Let $G''(t)$ induced by $S_{m(t)}$ and $S_{m(t+\frac{n-1}{2})}$ and let $V(S_{m(t)})= \lbrace u_1,u_2, \cdots, u_m \rbrace $ and $V(S_{m(t+\frac{n-1}{2})})$ $=\left\{v_1,v_2, \cdots, v_m\right\}$ be the vertex sets of $S_{m(t)}$ and $S_{m(t+\frac{n-1}{2})}$ where $u_1$ and $v_1$ are the respective center vertices. It can be seen that, $d(u_i,v_j)\in \left\{\frac{n+1}{2}, \frac{n+3}{2}\right\}$, where $i \neq j$.
\end{remark}

\begin{remark} \label{rem7}
For $i \neq j$, $d(u_1, v_j)=d(u_j, v_1)=\frac{n+1}{2}$ and for $i \neq j$, $i,j \neq 1$, $d(u_i,v_j)=\frac{n+3}{2}$.
\end{remark}

Now we obtain a lower bound value for the radio number labeling of $G''(t)$ in $G_{m,n}$.

\begin{lemma} \label{lem3}
Let $G''(t) \subset G_{m,n}$, with $m \geq 4$ and $n \geq 5$, $n$ odd, be a subgraph of $G_{m,n}$. Then $rn(G''(t)) \geq mn+m-\frac{1}{2}(n-3)$. 
\end{lemma}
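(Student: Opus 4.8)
The plan is to use the standard ``order-the-labels'' lower-bound technique, adapted to the $2m$ vertices of $G''(t)$. First I would record that $\operatorname{diam}(G_{m,n})=n+1$ (the path $P_n$ contributes $n-1$ and the star $S_m$ contributes $2$), so the radio condition reads $|f(x)-f(y)|\ge n+2-d(x,y)$ for all $x,y$. Writing the $2m$ vertices of $G''(t)$ in increasing order of label as $x_1,x_2,\ldots,x_{2m}$ (they are distinct since every pairwise distance is at most the diameter, forcing label gaps $\ge 1$), the span of $f$ on $G''(t)$ telescopes:
\[
\operatorname{span}(f)=\sum_{i=1}^{2m-1}\bigl(f(x_{i+1})-f(x_i)\bigr)\ge (2m-1)(n+2)-\sum_{i=1}^{2m-1} d(x_i,x_{i+1}).
\]
Hence it suffices to produce an upper bound $M$ for $\sum_{i=1}^{2m-1} d(x_i,x_{i+1})$ valid for \emph{every} ordering; the lemma then follows from $rn(G''(t))\ge (2m-1)(n+2)-M$.

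Second, I would bound $M$ using the distance data of Remarks \ref{rem6} and \ref{rem7}. Across the two stars the distances are $\frac{n-1}{2}$ (same index, in particular $d(u_1,v_1)$), $\frac{n+1}{2}$ (exactly one endpoint a centre), and $\frac{n+3}{2}$ (two distinct non-central leaves), while distances inside a single star are $1$ or $2$; for $n\ge 5$ the largest of all these is $\frac{n+3}{2}$. Thus each of the $2m-1$ consecutive distances is at most $\frac{n+3}{2}$, giving the crude bound $(2m-1)\frac{n+3}{2}$. The essential refinement is that the centre vertices $u_1$ and $v_1$ cannot attain this maximum: $d(u_1,\cdot)\le\frac{n+1}{2}$ and $d(v_1,\cdot)\le\frac{n+1}{2}$ for every other vertex. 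If $u_1$ and $v_1$ are non-adjacent in the ordering, they lie on two \emph{distinct} consecutive edges, each of length $\le\frac{n+1}{2}$ and so short of $\frac{n+3}{2}$ by at least $1$; if $u_1$ and $v_1$ are adjacent, the single edge between them has length $\frac{n-1}{2}$, short of the maximum by $2$. Either way the total deficit is at least $2$, so
\[
\sum_{i=1}^{2m-1} d(x_i,x_{i+1})\le (2m-1)\tfrac{n+3}{2}-2=:M=mn+3m-\tfrac{n}{2}-\tfrac{7}{2}.
\]

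Finally, I would substitute this into the telescoped inequality to obtain
\[
rn(G''(t))\ge (2m-1)(n+2)-M=mn+m-\tfrac12(n-3),
\]
which is exactly the claimed bound. The main obstacle is the second step: justifying the $-2$ correction rigorously, i.e.\ verifying that no ordering of the $2m$ vertices can route around both centres simultaneously. The case split on whether $u_1$ and $v_1$ are consecutive settles this cleanly, since on a Hamiltonian ordering of $2m\ge 8$ vertices each centre is incident to at least one edge. To confirm that the bound is tight, one can exhibit the alternating ordering that begins at $u_1$ and ends at $v_1$, placing the interior leaf indices so that consecutive cross-pairs always have distinct indices (a derangement of the leaf labels, available since each star has $m-1\ge 3$ leaves); this realises $M$ and shows the lower bound cannot be improved by this argument.
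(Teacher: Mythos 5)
Your argument is correct, and it reaches the paper's bound by a genuinely different (and more complete) route. The paper's own proof fixes one particular labelling order: it starts at the centre $v_1$ with label $0$, alternates between the two stars through leaves with pairwise distinct indices (forcing gaps of $\frac{n+1}{2}$ after an initial gap of $\frac{n+3}{2}$), and ends at the centre $u_1$, accumulating $2\cdot\frac{n+3}{2}+(2m-3)\cdot\frac{n+1}{2}=mn+m-\frac{1}{2}(n-3)$; the assertion that placing the two centres at $f_{\min}$ and $f_{\max}$ is optimal is stated but not justified, so strictly speaking the paper only lower-bounds the span of that one ordering. Your telescoping argument quantifies over \emph{all} orderings: the crude bound $(2m-1)\frac{n+3}{2}$ on $\sum_i d(x_i,x_{i+1})$, corrected by the observation that the two centres force a total deficit of at least $2$ whether or not they are consecutive in the ordering (two distinct incident pairs each $\le\frac{n+1}{2}$, or the single pair $\{u_1,v_1\}$ of length $\frac{n-1}{2}$), yields exactly the same constant, since $(2m-1)(n+2)-\bigl((2m-1)\tfrac{n+3}{2}-2\bigr)=(2m-1)\tfrac{n+1}{2}+2=mn+m-\tfrac12(n-3)$. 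Your closing remark that an alternating derangement-style ordering realises the maximum recovers the paper's construction as the extremal case. What your approach buys is a legitimate lower bound over all radio labelings of $G''(t)$ (closing the optimality gap in the paper); what the paper's explicit ordering buys is a concrete sequence that it reuses when chaining the $G''(t)$ blocks together in Lemma \ref{lem4} and again in the upper-bound construction of Lemma \ref{lem6}.
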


\begin{proof}
Let $u_1$ and $v_1$ be center vertices of $S_{m(t)}$ and $S_{m(t+\frac{n-1}{2})}$. By Remark \ref{rem7} above, $d(u_1,v_i)=d(u_i,v_1)= \frac{n+1}{2}$, is the shortest distance between the center vertex of a star in $G''(t)$ and a non-center vertex in the other star in $G''(t)$. It is optimal, therefore to label the center vertices as $f_{min}$ and $f_{max}$. Now, without loss of generality, set $f_{min}=f(v_1)=0$. Since $d(v_1,u_i)=\frac{n+1}{2}$, $i \in \left\{2,3, \cdots, m\right\}$. Therefore $f(u_i) \geq f(v_1) +diam(G_{m,n})+1-d(u_i,v_1)$. Let $i=2$. Thus,

\begin{eqnarray}
f(u_2) & \geq & 0+n+2-\frac{n+1}{2} \nonumber \\
& \geq & \frac{n+3}{2}. \nonumber
\end{eqnarray}

Now $d(u_2,v_3)= \frac{n+3}{2}$ and therefore,

\begin{eqnarray}
f(v_3) & \geq & \frac{n+3}{2}+n+2-\frac{n+3}{2} \nonumber \\
& \geq & \frac{n+3}{2}+\frac{n+1}{2}. \nonumber
\end{eqnarray}
Also, for $d(v_3,u_4)=\frac{n+3}{2}$, $f(v_4) \geq \frac{n+3}{2}+2(\frac{n+1}{2})$. By continuing the iteration, we have $f(v_m) \geq \frac{n+3}{2}+2m-3(\frac{n+1}{2})$. Lastly, $f_{max}=f(u_1) \geq 2(\frac{n+3}{2})+(2m-3)(\frac{n+1}{2})$ $=mn+m-\frac{1}{2}(n-3)$.
\end{proof}

Next we extend the last result to obtain a lower bound for $G(**)$.

\begin{lemma}\label{lem4}
For $G(**) \subset G_{m,n}$, $rn(G(**)) \geq \frac{1}{2}(mn^2-2mn-3m+2n-12)$.
\end{lemma}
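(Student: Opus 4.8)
The plan is to apply the standard consecutive-label lower bound to all $N:=(n-3)m$ vertices of $G(**)$ and then to control the total consecutive distance by separating it into a path part and a star part. Fix any radio labeling $f$ of $G_{m,n}$ and list the vertices of $G(**)$ as $x_0,x_1,\dots,x_{N-1}$ in strictly increasing order of label (labels are distinct since $d(u,v)\le diam(G_{m,n})$ forces $|f(u)-f(v)|\ge 1$). Using the radio condition on each consecutive pair and telescoping, I would obtain
\begin{eqnarray}
rn(G(**)) &\geq& (N-1)(n+2)-\sum_{i=0}^{N-2} d(x_i,x_{i+1}), \nonumber
\end{eqnarray}
so that everything reduces to bounding $\sum_i d(x_i,x_{i+1})$ from above over every ordering.

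Since $G_{m,n}=S_m\Box P_n$ carries the product metric, a vertex of $G(**)$ is a pair (copy index $p$, star-vertex $q$) and $d\big((p,q),(p',q')\big)=|p-p'|+d_{S_m}(q,q')$. I would therefore split $\sum d=\sum d_P+\sum d_S$ with $d_P=|p-p'|$ and $d_S\in\{0,1,2\}$, and bound the two pieces separately, using only that each is at most its own maximum over orderings. For the path part, the indices $p$ run over $\{2,\dots,\frac{n-1}{2}\}\cup\{\frac{n+3}{2},\dots,n-1\}$ with each value repeated $m$ times, splitting the multiset into a low and a high block of $\frac{(n-3)m}{2}$ entries each; the maximum of $\sum|p_i-p_{i+1}|$ is the zig-zag value $2(S_H-S_L)-\big(\frac{n+3}{2}-\frac{n-1}{2}\big)=\frac{m(n-1)(n-3)}{2}-2$, where $S_H,S_L$ are the block sums. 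For the star part, each label $q\in\{1,\dots,m\}$ occurs $n-3$ times with $q=1$ the center; since any edge incident to a center contributes at most $1$, the maximum of $\sum d_S$ is realized by putting two centers at the two path endpoints and slotting the remaining $n-5$ centers between leaves, giving $2(m-1)(n-3)$.

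Adding the two extremal values yields $\sum_i d(x_i,x_{i+1})\le\frac{m(n-1)(n-3)}{2}-2+2(m-1)(n-3)=\frac12\big(mn^2-9m-4n+8\big)$, and inserting this into the displayed inequality, using $(N-1)(n+2)=mn^2-mn-6m-n-2$, gives exactly $rn(G(**))\ge\frac12(mn^2-2mn-3m+2n-12)$, as claimed. The main obstacle is establishing the two extremal identities rigorously, since each is a maximum-weight Hamiltonian-path problem: for $d_P$ one must prove the zig-zag arrangement is optimal for the repeated multiset and pin down the boundary correction, and for $d_S$ one must check that no placement of the $n-3$ centers beats the endpoint placement. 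The rest is bookkeeping, and the decisive point is that the two independent maxima sum to precisely the constant needed to make the final bound tight.
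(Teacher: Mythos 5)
Your proposal is correct, and it takes a genuinely different route from the paper. The paper proves this lemma by decomposing $G(**)$ into its $\frac{n-3}{2}$ two-star subgraphs $G''(t)$, invoking the per-subgraph span bound of Lemma \ref{lem3}, and chaining these spans together with a transition cost of $\frac{n+3}{2}$ between consecutive subgraphs; this is short but implicitly assumes an optimal labeling processes the $G''(t)$ in blocks. You instead use the standard telescoping argument: order all $N=(n-3)m$ vertices by label, sum the radio conditions to get $rn(G(**))\ge (N-1)(n+2)-\sum_i d(x_i,x_{i+1})$, and bound the consecutive-distance sum using the product metric $d=d_P+d_S$. This is the more robust lower-bound technique, valid for \emph{every} labeling, and only requires upper bounds on $\sum d_P$ and $\sum d_S$ separately (not exact maxima, and not that one ordering achieves both). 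The two extremal bounds you flag as the main obstacle do hold and are not hard: for $d_P$, writing $\sum_i|p_i-p_{i+1}|=\sum_j c_j p_j$ with interior coefficients in $\{-2,0,2\}$, endpoint coefficients $\pm1$, zero total, and positive part at most $N-1$, gives $\sum d_P\le 2(S_H-S_L)-\left(\frac{n+3}{2}-\frac{n-1}{2}\right)=\frac{m(n-1)(n-3)}{2}-2$; for $d_S$, each consecutive pair containing exactly one of the $n-3$ center copies contributes exactly $1$ (losing $1$ from the maximum of $2$) and each pair containing two centers contributes $0$ (losing $2$), so the total loss is at least the number of center incidences, $2(n-3)-e\ge 2(n-3)-2$, yielding $\sum d_S\le 2(N-1)-2(n-4)=2(m-1)(n-3)$. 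Your arithmetic combining these with $(N-1)(n+2)=mn^2-mn-6m-n-2$ checks out and lands exactly on $\frac{1}{2}(mn^2-2mn-3m+2n-12)$, so with those two counting arguments filled in your proof is complete and, if anything, logically tighter than the paper's.
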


\begin{proof}
From \ref{lem3}, the $span(f)$ of $f$ on $G''(t)=mn+m-\frac{1}{2}(n-3)$. For $G''(t)$, $f_{max}=mn+m-\frac{1}{2}(n-3)$. Let $t=2$ and let $u_1=v_{m(2)} \in S_{m(2)}$ and $v'_1=v_{m(2+\frac{n+1}{2})} \in S_{m(2+\frac{n+1}{2})}$, be center vertices of $S_{m(2)}$ and $S_{m(2+\frac{n+1}{2})}$. Now, $d(u_i,v'_1) = \frac{n+1}{2}$. Thus, $f(v'_1) \geq f(u_1)+n+2-\frac{n+1}{2}=f(u_1)+\frac{n+3}{2}$. This implies that for $G''(3)$, induced by $S_{m(3)}$ and $S_{m(2+\frac{n+1}{2})}$, $f_{min}=f(u_1)+\frac{n+3}{2}$, and $f_{max}=f(v''_1)$, where $v''$ is the center vertex of $S_{m(3)}$. From the precedure in Lemma \ref{lem3}, there are $\frac{n-3}{2}$ $G''(t)$ subgraphs in $G_{m,n}$. Therefore, $f_{max}$ of $G(**)$ is $f(v^{(k)}_1) \in S_{(m(\frac{n-1}{2}))}$, where $f(v_1^{(k)})$ is the center vertex of $S_{m(\frac{n-1}{2})}$.  Following the iteration,

\begin{eqnarray}
f(v^{(k)}_1) & \geq & \frac{n-3}{2}[mn+m-\frac{1}{2}(n-3)]+\frac{n-5}{2} \left( \frac{n+3}{2}\right) \nonumber \\
& \geq & \frac{1}{2}(mn^2-2mn-3m+2n-12). \nonumber
\end{eqnarray}
 
\end{proof}

Now we establish a lower bound for the radio number of $G_{m,n}$.

\begin{lemma}\label{lem5}
Let $G_{m,n}$ be a stacked-book graph with $m \geq 4$ and $n \geq 5$. Then, $rn(G_{m,n}) \geq \frac{mn^2+m+2n-4}{2}$.
\end{lemma}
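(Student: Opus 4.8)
The plan is to assemble the global lower bound from the two pieces already established, exploiting the vertex partition $G_{m,n} = G(**) \cup G(*)$ coming from the definition $G(**) = G_{m,n} \setminus G(*)$. Every vertex of $G_{m,n}$ lies in exactly one of the two subgraphs, and any radio labeling $f$ of $G_{m,n}$ restricts to labelings of each piece that still satisfy $|f(u)-f(v)| \geq \mathrm{diam}(G_{m,n}) + 1 - d(u,v) = (n+2) - d(u,v)$, since both $d$ and the diameter are measured in the ambient graph. Hence the span that $f$ consumes on $G(**)$ is at least $rn(G(**))$ and the span it consumes on $G(*)$ is at least $rn(G(*))$, the quantities bounded in Lemma \ref{lem4} and Lemma \ref{lem2}.

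First I would label the vertices in two consecutive blocks --- all of $G(**)$, then all of $G(*)$ --- which is the natural continuation of the block-by-block scheme inside Lemma \ref{lem4}, where the $\frac{n-3}{2}$ subgraphs $G''(t)$ are labeled one after another and separated by increments of $\frac{n+3}{2}$. With this ordering the total span is at least the span used inside $G(**)$, plus the single increment needed to pass from the top of the $G(**)$ block to the first vertex of the $G(*)$ block, plus the span used inside $G(*)$.

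Next I would pin down the crossing increment. By Lemma \ref{lem4} the top label of $G(**)$ sits on the center vertex $v_1^{(k)}$ of $S_{m(\frac{n-1}{2})}$, and by Remark \ref{rem3} the first vertex labeled in $G(*)$ may be taken to be the center vertex $v_{1(n)}$ of $S_{m(n)}$. Since the distance between two center vertices equals the path distance between their stars, namely $n - \frac{n-1}{2} = \frac{n+1}{2}$, the radio condition forces
$$f(v_{1(n)}) \geq f(v_1^{(k)}) + (n+2) - \frac{n+1}{2} = f(v_1^{(k)}) + \frac{n+3}{2}.$$
Adding the three contributions and substituting Lemma \ref{lem4} and Lemma \ref{lem2},
\begin{eqnarray}
rn(G_{m,n}) & \geq & rn(G(**)) + \frac{n+3}{2} + rn(G(*)) \nonumber \\
& \geq & \tfrac{1}{2}(mn^2 - 2mn - 3m + 2n - 12) + \tfrac{n+3}{2} + \tfrac{1}{2}(2mn + 4m - n + 5) \nonumber \\
& = & \frac{mn^2 + m + 2n - 4}{2}, \nonumber
\end{eqnarray}
the final equality being the cancellations $-2mn+2mn=0$, $-3m+4m=m$, $2n+n-n=2n$, and $-12+3+5=-4$.

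The main obstacle is justifying the block-consecutive ordering: a priori an optimal labeling could interleave vertices of $G(**)$ and $G(*)$, so the additive split ``span on $G(**)$ plus one crossing plus span on $G(*)$'' is not automatic, and the restriction argument by itself only yields $\mathrm{span}(f) \geq \max(rn(G(**)), rn(G(*)))$. To close this gap rigorously I would pass to the global estimate $\mathrm{span}(f) \geq (mn-1)(n+2) - \sum_i d(x_i,x_{i+1})$, taken over the label-increasing enumeration $x_0, \dots, x_{mn-1}$, and show that the consecutive-distance sum is maximized by the block-consecutive ordering; this is what prevents any interleaving from undercutting the two intrinsic sub-bounds together with the single unavoidable crossing of cost $\frac{n+3}{2}$. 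Everything else is the routine arithmetic displayed above.
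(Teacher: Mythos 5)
Your proposal follows essentially the same route as the paper: decompose $G_{m,n}$ into $G(**)$ and $G(*)$, add the lower bounds of Lemma \ref{lem4} and Lemma \ref{lem2} together with the single crossing increment $\frac{n+3}{2}$ between the center of $S_{m(\frac{n-1}{2})}$ (the top of the $G(**)$ block) and the center of $S_{m(n)}$ (the bottom of the $G(*)$ block), and the arithmetic is identical. The gap you flag --- that a lower bound cannot simply assume an optimal labeling treats the two blocks consecutively, so the additive split $rn(G(**)) + \frac{n+3}{2} + rn(G(*))$ needs further justification --- is real, but the paper's own proof makes exactly the same unjustified split, so your write-up is, if anything, more candid than the original.
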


\begin{proof}
From Lemmas \ref{lem3} and \ref{lem4}, $rn(G(*)) \geq m(n+2)- \frac{1}{2}(n-5)$ and $rn(G(**)) \geq \frac{1}{2}(mn^2-2mn-3n+2n-12)$. Now, since $G_{m,n}=G(*) \cup G(**)$, suppose that $u_1$ is the center vertex of $S_{m(\frac{n-1}{2})}$ and $v_1 \in S_{m(n)}$ is the center vertex of $S_{m(n)}$. Clearly $d(u_1,v_i)= \frac{n+1}{2}$. Now, $f(u_1)=f_{max}$ of $G(**)$ and $f(u_1) \geq \frac{1}{2}(mn^2-2mn-3m+2n-12)$. Therefore, 

\begin{eqnarray}
f(v_1) & \geq & f(u_i)+n+2-\frac{(n+1)}{2} \nonumber \\
& \geq & \frac{1}{2}(mn^2-2mn-3m+2n-12)+n+2-\frac{(n+1)}{2}  \nonumber\\
& \geq & \frac{1}{2}(mn^2-2mn-3m+n-13)+n+2.\nonumber
\end{eqnarray}
For $G(*)$, set $f(v_1)= f_{min}$. Thus, $rn(G_{m,n}) \geq f(v_1)+rn(G(*))$ and hence,
\begin{eqnarray}
rn(G_{m,n}) & \geq &\frac{1}{2}(mn^2-2mn-3m+n-13)+n+2+m(n+2)-\frac{1}{2}(n-5)  \nonumber \\
& \geq & \frac{mn^2+m+2n-4}{2}. \nonumber
\end{eqnarray}

\end{proof}
Next, we investigate the upper bound of a stacked-book graph. The technique involves manual radio labeling of subgraphs $G(*)$ and $G(**)$ and merging the results.

\begin{lemma}\label{lem6}
Let $G(**) \subset G_{m,n}$, with $n$-odd. Then, $rn(G(**)) \leq \frac{1}{2}(mn^2-2mn+2n-3m-12)$. 
\end{lemma}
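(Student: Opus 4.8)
The plan is to produce a single explicit radio labeling $f$ of $G(**)$ whose span equals the asserted value $\frac{1}{2}(mn^2-2mn+2n-3m-12)$. Since this number coincides exactly with the lower bound already proved in Lemma \ref{lem4}, it suffices to exhibit one valid labeling meeting it. Concretely, I would take the greedy assignment implicit in the proofs of Lemmas \ref{lem3} and \ref{lem4}: list the vertices of $G(**)$ by processing the $\frac{n-3}{2}$ subgraphs $G''(t)$ in turn (Remark \ref{rem5}), and inside each $G''(t)$ alternate between the two stars $S_{m(t)}$ and $S_{m(t+\frac{n-1}{2})}$ exactly as in Lemma \ref{lem3}, beginning and ending at a center vertex, then splicing consecutive $G''(t)$'s together through the center-to-center transition of distance $\frac{n+1}{2}$ used in Lemma \ref{lem4}. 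Writing the vertices in this order as $w_0, w_1, \ldots$, I assign $f(w_0)=0$ and choose each subsequent label to be the smallest value satisfying the radio inequality against its immediate predecessor.

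Next I would record the per-step increments. By Remarks \ref{rem6} and \ref{rem7}, each consecutive distance $d(w_a,w_{a+1})$ is either $\frac{n+1}{2}$ or $\frac{n+3}{2}$, so, using $\mathrm{diam}(G_{m,n})=n+1$, the corresponding increment $f(w_{a+1})-f(w_a)=n+2-d(w_a,w_{a+1})$ is either $\frac{n+3}{2}$ or $\frac{n+1}{2}$. In particular every single-step increment is at least $\frac{n+1}{2}$. Summing these increments over the whole ordering reproduces verbatim the count carried out in Lemma \ref{lem4} and yields $\mathrm{span}\,f=\frac{1}{2}(mn^2-2mn+2n-3m-12)$, so the arithmetic is already done for me.

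The substantive step is to verify that $f$ is a genuine radio labeling, i.e. that $|f(u)-f(v)|\geq n+2-d(u,v)$ holds for \emph{every} pair $u,v$, not merely for consecutive ones. For a consecutive pair this is true by construction. For a pair $w_a,w_b$ with $b\geq a+2$, the key observation is that the labels increase along the ordering and each step adds at least $\frac{n+1}{2}$, whence
\[
f(w_b)-f(w_a)\;\geq\;(b-a)\,\tfrac{n+1}{2}\;\geq\;n+1\;\geq\;n+2-d(w_a,w_b),
\]
the last inequality because $d(w_a,w_b)\geq 1$ for distinct vertices. Thus the radio condition is automatic for all non-adjacent positions, and the only genuine constraints are the consecutive ones already satisfied; hence $rn(G(**))\leq \mathrm{span}\,f=\frac{1}{2}(mn^2-2mn+2n-3m-12)$.

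I expect the main obstacle to be bookkeeping rather than anything conceptual: one must confirm that the alternating order inside each $G''(t)$ and the splice between successive subgraphs never produce a single-step increment smaller than $\frac{n+1}{2}$, since the two-step estimate displayed above relies on exactly that floor. The delicate points are the boundary cases — the first and last vertices of each star block, and the center vertices used at the transitions between consecutive $G''(t)$'s — which must be assigned consistently with Lemmas \ref{lem3} and \ref{lem4}. Once every increment is checked to lie in $\{\frac{n+1}{2},\frac{n+3}{2}\}$, the entire verification collapses to the one-line inequality above.
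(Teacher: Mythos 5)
Your proposal follows essentially the same route as the paper's own proof: order the vertices of $G(**)$ by alternating between the two stars inside each $G''(t)$ and splicing consecutive blocks through a center-to-center step of distance $\frac{n+1}{2}$, then sum the per-step increments (each $\frac{n+1}{2}$ or $\frac{n+3}{2}$) to reach the stated span. The one element you add --- verifying the radio condition for non-consecutive pairs via $f(w_b)-f(w_a)\geq 2\cdot\tfrac{n+1}{2}=n+1\geq n+2-d(w_a,w_b)$ --- is precisely the completeness check the paper's proof leaves implicit, and it is a correct and welcome addition rather than a departure.
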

\begin{proof}
From earlier definition, if $n$ is odd, then, $G_{m,n}=G(*) \cup G(**)$, where $G(**)$ contains $\frac{n-3}{2}$ $G''(t)$ graphs. Let $G''(\frac{n-1}{2})$ be induced by $S_{m(\frac{n-1}{2})}$ and $S_{m(n-1)}$, $n$-odd. Let $V(S_{m(\frac{n+1}{2})})=\left\{v_{\frac{n-1}{1}(i)}\right\}^m_{i=1}$, $V(S_{m(n-1)})=\left\{v_{n-1}\right\}^m_{i=1}$, where $v_{\frac{n-1}{2}(1)}$, $v_{n-1(1)}$ are center vertices and $d(v_{\frac{n-1}{2}(j)},v_{n-1(j)})=\frac{n-1}{2}$, for all $1 \leq j \leq m$, $d(v_{\frac{n-1}{2}(1)}),v_{n-1(j)}=d(v_{n-1(1)},v_{\frac{n-1}{2}(j)})=\frac{n+1}{2}$ and $d(v_{\frac{n-1}{2}(j)},v_{n-1(k)})=\frac{n+3}{2}$. Now, let $f(v_{\frac{n-1}{2}(1)})=0$. Since $d(v_{\frac{n-1}{2}(1)}, v_{n-1(2)})= \frac{n+1}{2}$, then set $f(v_{n-1(k)})=\frac{n+3}{2}$. Let $f(v_{\frac{n-1}{2}(1)})=0$. Since $d(v_{\frac{n-1}{2}(1),v_{n-1(2)}})=\frac{n+1}{2}$, then set $f(v_{n-1(2)})=\frac{n+3}{2}$, $d(v_{n-1(2)},v_{\frac{n-1}{2}(3)})=\frac{n+3}{2}$ and thus, $f(v_{n-1}(4))=\frac{n+3}{2}+2\frac{n+1}{2}$. Thus, by continuously alternating the process, it gets to the case where $d(v_{\frac{n-1}{2}(m)},d(v_{n-1(m-1)}))=\frac{n+3}{2}$. Thus, $f(\frac{n-1}{2}(m))=\frac{n+3}{2}+m-2(\frac{n+1}{2})$, and since $d(v_{\frac{n-1}{2}(m)},v_{n-1(3)})=\frac{n+3}{2}$, $f(v_{n-1(3)})=\frac{n+3}{2}+(m-1)(\frac{n+1}{2})$, $f(v_{\frac{n-1}{2}(2)})=\frac{n+3}{2}+m(\frac{n+1}{2})$. Depending on the size of $m$, the labeling continues until $f(v_{\frac{n-1}{2}})=\frac{n+3}{2}+2m-3\frac{(n+3)}{2}+2(2m-3)(\frac{n+1}{2})$ is attained and finally, $d(v_{\frac{n-1}{2}(m-1)},v_{n-1(1)})=\frac{n+1}{2}$ and thus, $f(v_{n-1(1)})=\frac{2(n+3)}{2}+2m-3+\frac{n+1}{2}$. (By following the same argument, it is easy to obtain similar result for $m$-even.) Now, $d(v_{n-1(1)},v_{\frac{n-3}{2}(1)})=\frac{n+1}{2}$, where $v_{\frac{n-3}{2}(1)}$ is the center vertex of $G''_{m(\frac{n-3}{2})}$. Therefore, $f_{min}(G''(\frac{n-3}{2}))=f(v_{\frac{n-3}{2}(1)})=f(v_{n-1(1)})+n+2-\frac{n+2}{2}=$$f(v_{n-1(1)})+\frac{n+3}{2}=\frac{3(n+3)}{2}+2m-3(\frac{n+1}{2})$, and $f_{max}(G''(\frac{n-3}{2})=f(v_{n-2(1)})=f(v_{\frac{n-3}{2}(1)})+\frac{2(n+3)}{2}+\frac{(2m-3)(n+1)}{2}=$$\frac{5(n+3)}{2}+2(2m-3)\frac{(n+1)}{2}$, which is $f_{max}(G''(\frac{n-3}{2}))$. Now, the process is extended to $G''(2)$, for which $f(\frac{n+3}{2})=\frac{(n-5)(n+3)}{4}+\frac{(n-3)(n+3)}{2}+\frac{(n-2)(2m-3)(n+1)}{4}$=$\frac{1}{2}(mn^2-2mn+2n-3m-12)$.
\end{proof}

\begin{remark}
It can be observed that for the optimal radio labeling of $G(*)$, $f_{max}(G(*))$ is $f(v_{\frac{n+1}{2}(1)})$, the label on the center vertex of $S_{m(\frac{n+1}{2})}$. Since for $v_\alpha, v_\beta$ in $S_{m(1)}$ and $S_{m(n)}$ respectively, $\alpha, \beta \neq 1$, $d(v_{\frac{n+1}{2}(1)},v_\alpha)=d(v_{\frac{n+1}{2}(1)},v_\beta)=\frac{n+1}{2}$, which is less than $\frac{n+3}{2}$, the value of $d(v_{\frac{n+1}{2}(k)}, v_\alpha)$, where $k \neq \alpha$, $k, \alpha \neq 1$, and $v_\alpha$ either belongs to $S_{m(1)}$ or $S_{m(n)}$. Thus, we manually label $G(*)$, such that $v_{\frac{n+1}{2}(1)}$ gets the last label and thus, $f(v_{\frac{n+1}{2}(1)})=f_{max}(G(*))$.
\end{remark}

Next, we consider some necessary conditions for establishing the upper bound of $G(*)$.

\begin{lemma}\label{lem7}
Let $G(*) \subset G_{m,n}$ be a subset of $G_{m,n}$, induced by $S_{m(1)}, S_{m(\frac{n+1}{2})}$ and $S_{m(n)}$. If $v_{1(1)}$ (or $v_{n(1)}$) and $v_{\frac{n+2}{2}(1)}$ are the center vertices of $S_{m(1)}$ (or $S_{m(n)}$) and $S_{m(\frac{n+1}{2})}$ respectively, and $f_{min}(G(*)) \neq f(v_{1(1)})$ (or $f(v_{n(1)})$), and $f_{max}(G(*)) \neq f(v_{\frac{n+1}{2}})$ (or vice versa), then, $|f_{min}(G(*))-f_{max}(G(*))| \neq rn(G(*))$.

\end{lemma}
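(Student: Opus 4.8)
The plan is to prove the contrapositive: assuming a radio labeling $f$ of $G(*)$ attains $\mathrm{span}(f)=rn(G(*))$, I will show that $f_{min}$ must lie on an outer centre ($v_{1(1)}$ or $v_{n(1)}$) and $f_{max}$ on the middle centre $v_{\frac{n+1}{2}(1)}$ (up to interchanging the two extremes, which is the ``or vice versa'' alternative). Since the hypothesis of the lemma is precisely the failure of this conclusion, establishing it forces $\mathrm{span}(f)\ne rn(G(*))$.

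First I would record the governing gap sizes. Listing the $3m$ vertices $x_0,\dots,x_{3m-1}$ in increasing label order, the radio condition gives $f(x_{i+1})-f(x_i)\ge (n+2)-d(x_i,x_{i+1})$, and more generally $|f(u)-f(v)|\ge (n+2)-d(u,v)$ for \emph{every} pair. The decisive feature is that the three centre vertices are the expensive ones: each centre lies at distance $1$ from its $m-1$ leaves, so it must stay at label-distance $\ge n+1$ from each leaf. Using the distances computed in the proof of Lemma~\ref{lem1}, the middle centre has eccentricity $\tfrac{n+1}{2}$ in $G(*)$ (distances $\tfrac{n-1}{2}$ and $\tfrac{n+1}{2}$ to the outer centres and outer leaves), whereas each outer centre has eccentricity $n$. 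Hence the required gap from the middle centre to any other vertex is at least $(n+2)-\tfrac{n+1}{2}=\tfrac{n+3}{2}$, while an outer centre need only be $(n+2)-n=2$ from its nearest label.

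The core is a two-step exchange. Step one formalises the Remark preceding this lemma: the middle centre must be an endpoint of the ordering. Were it interior, with ordering-neighbours $u$ below and $w$ above, we would have $f(w)-f(u)\ge \tfrac{n+3}{2}+\tfrac{n+3}{2}=n+3$, whereas the direct constraint between $u$ and $w$ is only $(n+2)-d(u,w)$; deleting the middle centre, contracting the freed interval, and re-attaching it at the top (one new gap $\ge\tfrac{n+3}{2}$) strictly lowers the span, contradicting optimality. Step two pins down the second endpoint. Because each of the three centres forces gaps $\ge n+1$ to its own leaves, at most two centres can be protected at the two endpoints, so exactly one centre is forced interior; an interior \emph{outer} centre costs only two gaps $\ge 2$, while an interior middle centre would cost two gaps $\ge\tfrac{n+3}{2}$, a difference of order $n-1$. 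Thus in an optimal labeling the interior centre is an outer one, the middle centre holds one endpoint by Step one, and the remaining outer centre must hold the other; equivalently $f_{min}$ sits on an outer centre. Interchanging the roles of $f_{min},f_{max}$ and of $S_{m(1)},S_{m(n)}$ yields the ``or vice versa'' clause.

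I expect Step two to be the main obstacle. A purely local ``savings'' count is misleading here: by eccentricity alone a middle \emph{leaf} (eccentricity $\tfrac{n+3}{2}$, required gap $\tfrac{n+1}{2}$) looks like a better endpoint than an outer centre, and one can indeed write an ordering with endpoints $\{$middle centre, middle leaf$\}$ whose consecutive-gap bound equals $rn(G(*))$ — yet that ordering is \emph{not} label-feasible, since it violates the within-star gaps of size $\ge n$ between non-consecutive vertices. The honest argument must therefore be global: I would run an exchange that swaps a leaf occupying an endpoint with an interior outer centre and tracks every disturbed gap, crucially the same-star gaps of size $\ge n$, to show the swap strictly decreases the span. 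Guaranteeing that each intermediate labeling produced by the exchange is genuinely \emph{valid}, rather than merely improving the consecutive-gap sum, is the delicate point on which the whole proof turns.
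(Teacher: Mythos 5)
There is a genuine gap: your write-up is a plan whose decisive step is announced but never executed. In Step two you yourself exhibit the reason a local ``savings'' count fails (by eccentricity a middle leaf, with required gap $\tfrac{n+1}{2}$, looks like a better endpoint than an outer centre, and an ordering with endpoints $\{$middle centre, middle leaf$\}$ passes the consecutive-gap test), and you then state that ``the honest argument must be global'' and must track the within-star gaps of size $\ge n$ --- but you do not carry out that global exchange, and you close by conceding that the feasibility of the intermediate labelings ``is the delicate point on which the whole proof turns.'' Since the lemma quantifies over \emph{every} labeling in which the extremes miss the centres, exactly this unexecuted step is the content of the lemma; without it nothing is proved. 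The same feasibility issue already infects Step one: after deleting the middle centre you cannot simply ``contract the freed interval'' (subtracting a constant from all labels above the hole shrinks every gap that straddles it, not just the $u$--$w$ gap), and re-attaching the middle centre at the top with one new gap $\ge \tfrac{n+3}{2}$ does not certify the $\ge n+1$ separations it must keep from each of its own $m-1$ leaves, some of which may now sit near the top of the order.

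For comparison, the paper does not attempt your general exchange at all: it fixes one particular radio-labeling sequence in which the centres are misplaced, computes its span as $p+2n+7$, exhibits a second sequence with $v_{1(1)}$ first and $v_{\frac{n+1}{2}(1)}$ last of span $p+\tfrac{3n+11}{2}$, and concludes from $p+\tfrac{3n+11}{2}<p+2n+7$. That argument is only a comparison of two specific orderings (so it, too, falls short of the universal claim), but it is at least a completed computation. Your contrapositive/exchange strategy is the more principled route to the actual statement; to make it work you would need to (i) prove the contraction step produces a valid labeling, e.g.\ by re-deriving the lower bound on the span of any ordering whose endpoints omit a centre directly from the pairwise constraints rather than by surgery on a given labeling, and (ii) handle the middle-leaf endpoint case explicitly, which is precisely where the pairwise (non-consecutive) constraints, not the consecutive ones, must be invoked.
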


\begin{proof}
Without loss of generality, select $v_{1(1)}$ over $v_{n(1)}$. Suppose that $f(v_{1,1})$ and $f(v_{\frac{n+1}{2}(1)})$ are not $f_{min}(G(*))$ and $f_{max}(G(*))$ respectively. Let $v_\alpha \in V(S_{m(1)})$, $v_\beta \in V(S_{m(\frac{n+1}{2})})$, and $v_\gamma \in V(S_{m(n)})$ be non-center vertices, and let the set of the following vertices, $\lbrace v_\alpha, v_\beta, v_\gamma, v_{1(1)}, v_{\frac{n+1}{2}(1)}\rbrace $ be $X$, and let $H = V(G(*))\backslash X$ be the subgraph of $G(*)$ induced by $V(G(*))-X$, and such that the radio number of $H$ is positive integer $p$. 
Without loss of generality, let there be some $v_k \in V(H)$, where $v_k =v_{n(i)} \in S_{m(n)}, \gamma\neq i$ and $d(v_k,v_\beta)=\frac{n+3}{2}$,  there exist a radio numbering sequence $v_k \rightarrow v_\beta, \rightarrow v_{1(1)} \rightarrow v_\gamma \rightarrow v_{\frac{n+1}{2}(1)} \rightarrow v_\alpha$. Suppose that $f(v_k)$ is the $f_max(H)$, that is, $f(v_k)= p$. Since $d(v_k, v_\beta) = \frac{n+3}{2}$, then $f(v_\beta)= p+\frac{n+1}{2}$ and likewise, it is observed that the radio labeling sequence yields $f_{max}(G(*)) = p+2n+7$. Now, suppose on the contrary, that $f(v_{1(1)})$ and $f(v_{\frac{n+1}{2}(1)})$ are $f_{min} (G(*))$ and $f_{max}(G(*))$ respectively. Let $v_{k(0)}$ be the vertex in $H$, which holds the least radio label. Obviously $v_{k(0)} \neq v_k$ and since $|V(G(*))|-|V(H)| \equiv 3 \mod 1$, then $v_{k(0)}$ is a is also a vertex on the same star as $v_k$, this time, $S_{m(n)}$. Thus,if $v_{k(0)}$ is also not a center vertex, then, $d(v_{1(1)}, v_{k(0)})=n$. Let $f(v_{1(1)})=0$. Now, we have the radio labeling sequence : $v_{1(1)} \rightarrow (v_{k(0)} \rightarrow \cdots \rightarrow v_k) \rightarrow v_\beta \rightarrow v_\alpha \rightarrow v_\gamma \rightarrow v_{\frac{n+1}{2}(1)}$. Since $d(v_{k(0)}, v_{1(1)})=n$, then, $f(v_{k(0)})= 2$ and since $|f_{min}(H)-f_{max}(H)|=p$, then $f(v_k)=2+p$. Labeling the sequence, afterwards, we have $f_{max}(G(*))=f(v_{\frac{n+1}{2}(1)}) = p+ \frac{3n+11}{2}$, which is less than $p+2n+7$.    
\end{proof}

\begin{remark} It is noted that $v_{1(1)}$ (or $v_{n(1)(1)}$) and $v_{\frac{n+1}{2}}$ can be  $f_{min}(G(*))$ and $f_{max}G(*)$ interchangeably. However, they both will have to be used for these roles. It is trivial to show that optimal radio labeling will not be attained if just one of them is used.  
\end{remark}

Next we obtain an upper bound for $G(*)$, based on Lemma \ref{lem7}.
\begin{lemma}\label{lem8}
For $G(*) \subset G_{m,n}$, $m \geq 5$, $rn(G(*)) \leq \frac{1}{2}(2mn+4m-n+7)$.
\end{lemma}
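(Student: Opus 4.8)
The plan is to prove the bound by exhibiting a single radio labeling of the $3m$ vertices of $G(*)$ whose span equals $\frac{1}{2}(2mn+4m-n+7)$; by Lemma \ref{lem2} this value sits exactly one unit above the lower bound, so one explicit labeling suffices. First I would record the metric of $G(*)$. With the three stars $S_{m(1)}$, $S_{m(\frac{n+1}{2})}$, $S_{m(n)}$ at path-positions $1,\frac{n+1}{2},n$, the product distance between two leaves is $n+1$ across the two end stars and $\frac{n+3}{2}$ between the middle star and an end star, while center-to-leaf and equal-index distances are smaller by one and two respectively. Through $|f(u)-f(v)|\ge n+2-d(u,v)$ this makes the cheapest admissible increments $1$ (an end--end leaf pair) and $\frac{n+1}{2}$ (an end--middle leaf pair), whereas two vertices of one star force a gap of $n$ (leaf--leaf) or $n+1$ (center--leaf).

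Guided by Lemma \ref{lem7} and the remark preceding this lemma, I would set $f_{\min}(G(*))=f(v_{1(1)})=0$ at the center of an end star and reserve $f_{\max}(G(*))=f(v_{\frac{n+1}{2}(1)})$ for the center of the middle star. The decisive structural observation is that the tight intra-star gap $n$ forces any two vertices of a common star to be at least three steps apart in an economical ordering; this dictates a labeling that \emph{cycles through the three stars with period three}, one vertex of each star per consecutive block. Since the three stars form a triangle, every period consumes one edge of each pair-type, so choosing consecutive vertices to be leaves of distinct indices makes each period contribute increments $1+\frac{n+1}{2}+\frac{n+1}{2}=n+2$. The three centers then perturb this only locally: the two end centers occupy the two extremities of the sequence and the remaining center of $S_{m(n)}$ sits in the interior, and each center-incident edge turns a leaf--leaf step into a center--leaf step and hence adds exactly $1$. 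Totalling $m-1$ full periods, a length-two remainder $1+\frac{n+1}{2}=\frac{n+3}{2}$, and four unit center-corrections, I would obtain the span $(m-1)(n+2)+\frac{n+3}{2}+4=\frac{1}{2}(2mn+4m-n+7)$ exactly.

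The step I expect to be the crux is the assignment of leaf indices along the cycle, where two conditions must hold simultaneously: consecutive vertices must carry distinct indices so that each interior edge realises the full leaf--leaf distance, and two vertices lying two steps apart in the two \emph{close} stars must also carry distinct indices, since a shared index there would demand a gap of $\frac{n+5}{2}$ while the two intervening increments supply only $1+\frac{n+1}{2}=\frac{n+3}{2}$. (Two-apart vertices in the two far end stars may safely repeat an index, as their gap then comfortably exceeds the required $3$.) I would exhibit one such index scheme explicitly and argue that it always extends for $m\ge 5$, there being enough free indices. With the indices fixed I would finish by the standard gap-counting verification: consecutive pairs hold by construction; pairs two apart hold because their two increments sum to at least $\frac{n+3}{2}$, which is at least $n+2-d$ for every admissible two-apart distance $d$; and pairs three or more apart hold automatically, since any three consecutive increments already total $n+2\ge n+2-d$ for all $d\ge 1$. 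The only delicate bookkeeping remaining is at the two ends of the sequence, where the two end-star and middle-star centers and the length-two remainder of the cycle must be placed without creating a same-star or same-index clash, and where the additive constant is pinned down.
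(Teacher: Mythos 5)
Your proposal follows essentially the same route as the paper: an explicit labeling that starts at the center $v_{1(1)}$, ends at the center $v_{\frac{n+1}{2}(1)}$, and cycles through the three stars with per-period increments $1+\frac{n+1}{2}+\frac{n+1}{2}=n+2$ plus unit corrections at the four center-incident steps, arriving at the same total $\frac{1}{2}(2mn+4m-n+7)$. Your explicit check of the radio condition for pairs two and three-or-more steps apart in the ordering is actually more careful than the paper's own argument, which only tracks consecutive increments; the one piece you leave unexhibited (a concrete index scheme avoiding same-index clashes between the middle star and an adjacent end star) is genuinely needed but is no less complete than what the paper provides.
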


\begin{proof}
From Lemma \ref{lem7} For $v_{1(1)} \in S_{m(1)}$, let $f(v_{1(1)})=0$. There exist $m-1$ vertices of $S_{m(n)}$, such that for each $v_{n(i)} \in V(S_{m(n)})$, $i \neq 1 $, $d(v_{1(1)}, v_{n(i)})=n$. Thus, without loss of generality, let the first vertex be $v_{n(2)}$. Then, $f(v_{n(2)})=2$. Likewise, there exists $m-1$, non-center vertex on $S_{m(\frac{n+1}{2})}$, and for each $v_{\frac{n+1}{2}(j)}$, $j \neq 1$, $d(v_{n(2)},v_{\frac{n+1}{2}(j)})=\frac{n+3}{2}$, where $j \neq 2$. So, now, let $j=3$, then, $f(v_{\frac{n+1}{2}(3)})=2+n+2-\frac{n+3}{2}=2 + \frac{n+1}{2}$. In similar way, $f(v_{1(4)})=2+\frac{n+1}{2}+\frac{n+1}{2}$. Now, we label $v_{n(1)}$, which is at distance $n$ from $v_{1(4)}$ as $f(v_{n(1)})=4+\frac{n+1}{2}+\frac{n+1}{2}$. Now, two of the center vertices are labeled. For, say, $v_{\frac{n+1}{2}(5)}$, $f(v_{\frac{n+1}{2}(5)})=4+\frac{n+1}{2} + \frac{n+1}{2} + \frac{n+3}{2}$. It can be seen that for each of $S_{m(1)}$, $S_{m(\frac{n+1}{2})}$ and $S_{m(n)}$, there are $m-2$ vertices left to be labeled. This is now done by adding $\frac{(n+1)}{2}$ and $1$ in alternating manner to the cumulative label values, such that we have $f(v_{1(6)})=4+3(\frac{n+1}{2})+\frac{n+3}{2}$ and $f(v_{n(7)})=5+ 3(\frac{n+1}{2})+\frac{n+3}{2}$. Thus by continuing the iteration until it gets to $f(v_{\frac{n+1}{2}(1)})=(m+2)+(2m-3)(\frac{n+1}{2})+2(\frac{n+3}{2})=\frac{1}{2}(2mn+4m-n+7)$.
\end{proof}

Next, we merged the last results to obtain an upper bound for the radio number of a stacked-book graph $G_{m,n}$, where $m \geq 5$.

\begin{theorem} \label{thm_last}
Let $m \geq 5$.  Then, $rn(G_{m,n}) \leq \frac{1}{2}(mn^2+2n+m-2)$.
\end{theorem}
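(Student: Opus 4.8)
The plan is to obtain $rn(G_{m,n})$ by decomposing $G_{m,n}$ into the two subgraphs $G(*)$ and $G(**)$, labeling each optimally via the upper bounds already established, and then splicing the two labelings together across the single adjacency that connects them. Concretely, Lemma~\ref{lem6} gives $rn(G(**)) \leq \frac{1}{2}(mn^2-2mn+2n-3m-12)$ and Lemma~\ref{lem8} gives $rn(G(*)) \leq \frac{1}{2}(2mn+4m-n+7)$, so the whole argument reduces to showing that these two labelings can be concatenated with only a controlled additive ``gap'' between them, and that the total telescopes to $\frac{1}{2}(mn^2+2n+m-2)$.

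First I would fix the labeling of $G(**)$ produced in Lemma~\ref{lem6}, whose maximum label is attained on the center vertex of $S_{m(\frac{n-1}{2})}$. The key structural fact, already recorded in the discussion preceding Lemma~\ref{lem7}, is that the optimal labeling of $G(*)$ forces $f_{\max}(G(*))$ to sit on the center vertex $v_{\frac{n+1}{2}(1)}$ of the middle star, while $f_{\min}(G(*))$ sits on a center vertex of $S_{m(1)}$ or $S_{m(n)}$. So the natural merge point is the adjacency that mirrors the one used inside Lemma~\ref{lem5}: the center vertex of $S_{m(\frac{n-1}{2})}$ (carrying $f_{\max}(G(**))$) and a center vertex of $S_{m(n)}$ (carrying $f_{\min}(G(*))$), which are at distance $\frac{n+1}{2}$. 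I would set $f_{\min}(G(*))$ equal to $f_{\max}(G(**))$ shifted by exactly the radio increment $\mathrm{diam}(G_{m,n})+1-\frac{n+1}{2} = \frac{n+3}{2}$, so that the splice respects the radio condition at the seam while wasting nothing.

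The computation then runs as follows. Writing the total span as $f_{\max}(G(**)) + \frac{n+3}{2} + rn(G(*))$, I would substitute the two lemma bounds and verify by direct algebra that
\begin{eqnarray}
\tfrac{1}{2}(mn^2-2mn+2n-3m-12) + \tfrac{n+3}{2} + \tfrac{1}{2}(2mn+4m-n+7) &=& \tfrac{1}{2}(mn^2+2n+m-2). \nonumber
\end{eqnarray}
The cross terms $-2mn$ and $+2mn$ cancel, the $-3m$ and $+4m$ combine with the seam contribution, and the constants collapse to $-2$, which is exactly the target. This is the mechanical heart of the proof and should be routine once the seam increment is pinned down.

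The main obstacle is not the arithmetic but the \emph{verification that the concatenated function is genuinely a radio labeling of the full graph}, not merely of the two pieces separately. Labeling $G(*)$ and $G(**)$ independently guarantees the radio condition only for pairs of vertices lying in the same block; I must still check every cross pair $u \in G(**)$, $w \in G(*)$. Because the two blocks are separated by the middle star and the distances between them are comparatively large (at least $\frac{n+1}{2}$, and often close to $n$ or more), the label separations inherited from the shift should dominate the required $\mathrm{diam}(G_{m,n})+1-d(u,w)$ for all but the vertices nearest the seam. I would therefore argue that the seam vertices are the binding constraint -- exactly the pair used to define the shift -- and that all other cross pairs satisfy the condition with room to spare, invoking Remarks~\ref{rem6} and \ref{rem7} to control the relevant distances. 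Establishing this ``no cross-violation'' claim cleanly, rather than the final telescoping sum, is where the real care is needed.
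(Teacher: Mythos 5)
Your proposal follows essentially the same route as the paper: label $G(**)$ by Lemma~\ref{lem6}, label $G(*)$ by Lemma~\ref{lem8}, splice them at the distance-$\frac{n+1}{2}$ pair of center vertices with an increment of $\frac{n+3}{2}$, and telescope to $\frac{1}{2}(mn^2+2n+m-2)$, which matches the paper's computation exactly. The cross-pair verification you rightly flag as the delicate point (checking the radio condition for every $u \in G(**)$, $w \in G(*)$, not just the seam pair) is not carried out in the paper's proof either, so your attempt is at least as complete as the original.
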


\begin{proof}
Recall that $G=G(*) \cup G(**)$. From Lemma \ref{lem6}, where $G(**)$ is labeled, we see that for $G(**)$, $f_{max}(G(**))=f(v_{\frac{n+3}{2}(1)})$. For $G(*) \in G_{m,n}$, we see in Lemma \ref{lem8} that $f(v_{1(1)})=f_{min}$. Clearly, $d(v_{\frac{n+3}{2}(1)},v_{1(1)})= \frac{n+1}{2}$. Thus, for $v_{1(1)} \in G_{m,n}$, $f(v_{1(1)})=f(v_{\frac{n+3}{2}(1)})+\frac{n+3}{2}=\frac{1}{2}(mn^2-2mn+2n-3m-12)+\frac{n+3}{2}=\frac{1}{2}(mn^2-2mn+3n-3m-9)$. Thus by Lemma \ref{lem8}, $f_{max}(G_{m,n})=f(v_{1(1)})+f_{max}(G(**))=\frac{1}{2}(mn^2+2n+m-2)$.  

\end{proof}

\begin{remark}
	We observe that the result in \ref{thm_last} that the there is just a difference of of $1$ between this upper bound and the lower bound established earlier in the work. It is believed that the lower bound can be improved to coincide with the upper bound. 
\end{remark}

The next figure show the radio labeling of $G_{5,5}$, where it is demonstrated that $rn(G_{5,5}) \leq 69$.

\begin{center}
	\pgfdeclarelayer{nodelayer}
	\pgfdeclarelayer{edgelayer}
	\pgfsetlayers{nodelayer,edgelayer}
	\begin{tikzpicture}
	\begin{pgfonlayer}{nodelayer}

	\node [minimum size=0cm,draw,circle] (0) at (2,0.5) {\tiny{64}};
	\node [minimum size=0cm,draw,circle] (1) at (5,0.5) {\tiny{13}};
	\node [minimum size=0cm,draw,circle] (2) at (8,0.5) {\tiny{54}};
	\node [minimum size=0cm,draw,circle] (3) at (11,0.5) {\tiny{22}};
	\node [minimum size=0cm,draw,circle] (4) at (14,0.5) {\tiny{35}};
	
	\node [minimum size=0cm,draw,circle] (5) at (0,1) {\tiny{41}};
	\node [minimum size=0cm,draw,circle] (6) at (3,1) {\tiny{19}};
	\node [minimum size=0cm,draw,circle] (7) at (6,1) {\tiny{61}};
	\node [minimum size=0cm,draw,circle] (8) at (9,1) {\tiny{10}};
	\node [minimum size=0cm,draw,circle] (9) at (12,1) {\tiny{51}};
	
	\node [minimum size=0cm,draw,circle] (10) at (1,2) {\tiny{33}};
	\node [minimum size=0cm,draw,circle] (11) at (4,2) {\tiny{0}};
	\node [minimum size=0cm,draw,circle] (12) at (7,2) {\tiny{69}};
	\node [minimum size=0cm,draw,circle] (13) at (10,2) {\tiny{29}};
	\node [minimum size=0cm,draw,circle] (14) at (13,2) {\tiny{43}};
	
	\node [minimum size=0cm,draw,circle] (15) at (0,3) {\tiny{57}};
	\node [minimum size=0cm,draw,circle] (16) at (3,3) {\tiny{7}};
	\node [minimum size=0cm,draw,circle] (17) at (6,3) {\tiny{47}};
	\node [minimum size=0cm,draw,circle] (18) at (9,3) {\tiny{16}};
	\node [minimum size=0cm,draw,circle] (19) at (12,3) {\tiny{65}};
	
	\node [minimum size=0cm,draw,circle] (20) at (2,3.5) {\tiny{50}};
	\node [minimum size=0cm,draw,circle] (21) at (5,3.5) {\tiny{25}};
	\node [minimum size=0cm,draw,circle] (22) at (8,3.5) {\tiny{38}};
	\node [minimum size=0cm,draw,circle] (23) at (11,3.5) {\tiny{4}};
	\node [minimum size=0cm,draw,circle] (24) at (14,3.5) {\tiny{58}};

	\node [minimum size=0] (25) at (6,-1) {Figure 2. A $G_{5,5}$ Stacked-book Graph };
	
	\end{pgfonlayer}
	\begin{pgfonlayer}{edgelayer}
	\draw [thin=1.00] (0) to (1);
	\draw [thin=1.00] (1) to (2);
	\draw [thin=1.00] (2) to (3);
	\draw [thin=1.00] (3) to (4);
	\draw [thin=1.00] (5) to (6);
	\draw [thin=1.00] (6) to (7);
	\draw [thin=1.00] (7) to (8);
	\draw [thin=1.00] (8) to (9);
	\draw [thin=1.00] (10) to (11);
	\draw [thin=1.00] (11) to (12);
	\draw [thin=1.00] (12) to (13);
	\draw [thin=1.00] (13) to (14);
	\draw [thin=1.00] (15) to (16);
	\draw [thin=1.00] (16) to (17);
	\draw [thin=1.00] (17) to (18); 
    \draw [thin=1.00] (18) to (19);
    
	\draw [thin=1.00] (20) to (21);
	\draw [thin=1.00] (21) to (22);
	\draw [thin=1.00] (22) to (23);
	\draw [thin=1.00] (23) to (24);
	
	\draw [thin=1.00] (10) to (5);
	\draw [thin=1.00] (10) to (15);
	\draw [thin=1.00] (10) to (0);
    \draw [thin=1.00] (10) to (20);
	
	\draw [thin=1.00] (11) to (16);
	\draw [thin=1.00] (11) to (6);
	\draw [thin=1.00] (11) to (1);
	\draw [thin=1.00] (11) to (21);
	
	\draw [thin=1.00] (12) to (17);
	\draw [thin=1.00] (12) to (7);
	\draw [thin=1.00] (12) to (2);
	\draw [thin=1.00] (12) to (22);
	
	\draw [thin=1.00] (13) to (18);
	\draw [thin=1.00] (13) to (8);
	\draw [thin=1.00] (13) to (3);
	\draw [thin=1.00] (13) to (23);
	
	\draw [thin=1.00] (14) to (19);
	\draw [thin=1.00] (14) to (9);
	\draw [thin=1.00] (14) to (4);
	\draw [thin=1.00] (14) to (24);

	\end{pgfonlayer}
	\end{tikzpicture}
	
\end{center}

\section{Conclusion}  
This work has greatly improved results obtained in \cite{AA1} and extended the outcomes of \cite{AA1} to the odd-path factor of the stacked-book graph class. It is safe now to say that this work and \cite{AA2} have provided a tight bounds for the radio  number of the general stacked-book graph. Further work to obtain the exact value of the radio number for stacked-book graph should be considered.


\begin{thebibliography}{99}
 
\bibitem[1]{AA1} D.O.A. Ajayi and T.C. Adefokun {\it {On Bounds of Radio Number of Certain Product Graphs}}, Jour Nigeria Math Soc. 37(2) (2018) 71-78.

\bibitem[2]{AA2} T.C. Adefokun and D.O.A. Ajayi {\it {On Radio Number of Stacked-Book Graph}}, arXiv:1901.00355. Jan 2019.

\bibitem[3]{BD1} D. Bantva, S Vaidya and S. Zhou {\it{Radio Numbers of trees}}, Electron. Notes in Discrete Math. 48 (2015) 135-141.


\bibitem[4]{BD2} D. Bantva  {\it{Radio Numbers of Middle graph of paths}}, Electron. Notes in Discrete Math. 66 (2017) 93-100.



\bibitem[5]{CEHZ1} G. Chartrand, D. Erwin, F Harary and P. Zhang, {\it {Radio Labelings of Graphs}}, Bull. Inst. Combin. Appl. 33 (2001) 77-85.
	
\bibitem[6]{CEZ1} G. chartrand, D. Erwin and P. Zhang, {\it{A Graph Labeling Problem Suggested}} by FM Channel Restrictions, Bull. Inst. Combin. Appl. 43 (2005) 43-57.
		
\bibitem[7]{Hale} W. K. Hale, {\it{Frequency Assignment Theory and Applications}}, Proc. IEEE, 68 (1980) 1497 - 1514

\bibitem[8]{J1} T-S Jiang, {\it{The Radio Number of Grid Graphs}}, arXiv:1401.658v1. 2014.
		
\bibitem[9]{LX2} D.D.-F. Liu and M. Xie {\it{Radio Number for Square Paths}}, Ars Combin. 90 (2009) 307-319.

\bibitem[10]{LX1} D.D.-F. Liu and M. Xie {\it{Radio Number for Square Cycles}}, Congr. Numer. 169 (2004) 105-125.
		
\bibitem[11]{LZ1} D. Liu and X. Zhu, {\it{Multilevel Distance Labelings for Paths and Cycles}}, SIAM J. Discrete Math. 19 (2005) 610-621.
		
\bibitem[12]{NSS1} A. Naseem, K. Shabbir and H. Shaker {\it{The radio Number of Edge-Joint Graphs}}, ARS Comb 139 (2018) 337-351.
		
		
\bibitem[13]{SP1} L. Saha and P. Panigrahi, {\it{On the Radio Numbers of Toroidal Grid}}, Aust. Jour. Combin. 55 (2013) 273-288.
		
		
		

		


\end{thebibliography}
\end{document}